\date{\today}
\newtheorem{thm}{Theorem}[section]
\newtheorem{lem}[thm]{Lemma}
\newtheorem{prop}[thm]{Proposition}
\theoremstyle{definition}
\theoremstyle{remark}
\newtheorem{rem}[thm]{Remark}
\numberwithin{equation}{section}
\newcommand{\R}{\mathbb R}
\newcommand{\He}{\mathbb H}
\newcommand{\C}{{\mathbb C}}
\renewcommand{\Im}{\operatorname{Im}}
\newcommand{\blue}[1]{\textcolor{blue}{#1}}
\newcommand{\red}[1]{\textcolor{red}{#1}}
\title[Uncertainty principles on  $\He^n$]
{Analogues of theorems of Chernoff and Ingham\\ 
on the Heisenberg group }
\author[ Ganguly and Thangavelu]{ Pritam Ganguly and Sundaram Thangavelu}
\address[P. Ganguly, S. Thangavelu]{Department of Mathematics\\
Indian Institute of Science\\
560 012 Bangalore, India}
\email{pritamg@iisc.ac.in, veluma@iisc.ac.in}
 \date{}
\keywords{Heisenberg group, Heisenberg motion group, Laplacian, spectral projections, Chernoff's theorem, Ingham's theorem.}
\subjclass[2010]{Primary: 43A80. Secondary: 22E25, 33C45, 26E10, 46E35.}
\begin{document}

\maketitle

\begin{abstract} We prove an analogue of Chernoff's theorem for the Laplacian $ \Delta_{\He} $ on the Heisenberg group $ \He^n.$ As an application, we prove  Ingham type theorems for the group Fourier transform on $ \He^n $  and also for the spectral projections associated to the sublaplacian. 
	   
 \end{abstract}

\section{Introduction}
Uncertainty principles in harmonic analysis have thrilled mathematicians for a long time. One of the several avatars  of the uncertainty principle,  dealing with the best possible decay admissible for the Fourier transform of a nontrivial function which vanishes on an open set was studied by Ingham in 1934.  Proving analogues of this result in various settings has  received considerable attention in recent years.  In some of the works, a theorem of Chernoff  on quasi analytic functions  has played an important role in proving Ingham type theorems. In this paper our aim is two-fold. We first prove an analogue of Chernoff's theorem for the full Laplacian  $\Delta_{\mathbb{H}} $ on the Heisenberg group $ \He^n$ and then use it prove  Ingham type theorems for the (operator valued) Fourier transform  on $\mathbb{H}^n$ and also for the spectral projections associated to the sublaplacian $ \mathcal{L}.$

 Chernoff's theorem on $ \R^n $ is to be viewed as a higher dimensional analogue of Denjoy-Carleman theorem which characterizes quasi analytic functions. In 1950, instead of using partial derivatives, Bochner used iterates of Laplacian $ \Delta $ to study quasi analytic functions on $ \R^n.$ Later in 1972, by using operator theoretic arguments, Chernoff \cite{C} improved the result of Bochner and proved the following result.

\begin{thm}\cite[Chernoff]{C}
		Let $f$ be a smooth function on $\mathbb{R}^n.$ Assume that $\Delta^mf\in L^2(\mathbb{R}^n)$ for all $m\in \mathbb{N}$ and $\sum_{m=1}^{\infty}\|\Delta^mf\|_2^{-\frac{1}{2m}}=\infty.$ If $f$ and all its partial derivatives   vanish at  a point  $ a \in \R^n$, then $f$ is identically zero.
\end{thm}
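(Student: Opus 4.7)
The plan is to reduce Chernoff's theorem on $\mathbb{R}^n$ to the classical one-dimensional Denjoy--Carleman theorem by restricting $f$ to lines through $a$ and controlling every partial derivative by powers of the Laplacian via Plancherel and Sobolev embedding. Without loss of generality we may take $a=0$. For each fixed direction $\omega \in S^{n-1}$ set $g_\omega(t):=f(t\omega)$; the chain rule together with the vanishing of every partial derivative of $f$ at the origin gives $g_\omega^{(k)}(0) = \sum_{|\alpha|=k}\binom{k}{\alpha}\omega^\alpha (\partial^\alpha f)(0) = 0$ for all $k \ge 0$. If every such $g_\omega$ vanishes identically on each bounded interval, then $f\equiv 0$, so the task reduces to placing $g_\omega$ in a one-dimensional quasi-analytic class.

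The chain rule also yields the pointwise estimate
$$\|g_\omega^{(k)}\|_{L^\infty(\mathbb{R})}\le n^k\max_{|\alpha|=k}\|\partial^\alpha f\|_{L^\infty(\mathbb{R}^n)}.$$
To bound $\|\partial^\alpha f\|_\infty$ in terms of iterates of $\Delta$, fix $N$ with $2N > n/2$. Sobolev embedding yields $\|\partial^\alpha f\|_\infty \le C_n \|(I-\Delta)^N \partial^\alpha f\|_2$, and Plancherel combined with $|\xi^\alpha|^2\le |\xi|^{2|\alpha|}$ gives
$$\|(I-\Delta)^N \partial^\alpha f\|_2^2 \le \sum_{j=0}^{2N}\binom{2N}{j}\bigl\||\xi|^{|\alpha|+j}\hat f\bigr\|_2^2.$$
For an even exponent $2p$ one has $\||\xi|^{2p}\hat f\|_2 = \|\Delta^p f\|_2$, while for an odd exponent $2p+1$ the Cauchy--Schwarz log-convexity estimate $\||\xi|^{2p+1}\hat f\|_2 \le \|\Delta^p f\|_2^{1/2}\|\Delta^{p+1}f\|_2^{1/2}$ handles the remaining case. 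Putting everything together we arrive at
$$\|g_\omega^{(k)}\|_\infty \le C A^k M_k,\qquad M_k := \max_{0\le j\le 2N+1}\|\Delta^{m_{k,j}}f\|_2,$$
where the $m_{k,j}$ are integers with $|2m_{k,j}-k|\le 2N+2$ and $A$ depends only on $n$ and $N$.

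It then remains to verify the Denjoy--Carleman condition $\sum_k M_k^{-1/k} = \infty$, after which the quasi-analyticity of $g_\omega$ combined with its flat vanishing at $0$ forces $g_\omega\equiv 0$ on every bounded interval, and hence $f\equiv 0$. The main technical obstacle is precisely this bookkeeping step: one must show that the hypothesis $\sum_m \|\Delta^m f\|_2^{-1/(2m)}=\infty$ survives the Sobolev-induced index shift $k\mapsto m_{k,j}$ and the multiplicative constants $A^k$. This goes through because $2m_{k,j}=k+O(1)$ makes $M_k^{-1/k}$ and $\|\Delta^{\lceil k/2\rceil}f\|_2^{-2/k}$ comparable, while $A^{-1/k}\to 1$ contributes only a harmless constant in the limit; a standard comparison argument then transfers divergence of the original sum to divergence of $\sum_k M_k^{-1/k}$, which completes the reduction.
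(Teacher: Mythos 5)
First, note that the paper does not prove this statement at all: Theorem 1.1 is quoted from Chernoff \cite{C}, whose own argument is operator-theoretic (quasi-analytic vectors for the self-adjoint operator $\Delta$, reduced to Carleman's criterion for determinacy of the spectral measure's moment problem), and the closest the paper comes to a proof is its treatment of the analogues in Theorems \ref{cl} and \ref{csl}, which instead build a measure $\mu_f$ on the joint spectrum, verify the Carleman condition for its moments, and invoke de Jeu's density theorem (Theorem \ref{dj}). Your route is genuinely different from both: you restrict $f$ to lines through $a$, convert the hypothesis on $\|\Delta^m f\|_2$ into sup-norm bounds on the derivatives of each $g_\omega$ via Sobolev embedding and Plancherel, and then quote the one-dimensional Denjoy--Carleman theorem. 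This is more elementary and self-contained (no moment-problem machinery), and it exploits in an essential way the hypothesis that \emph{all} partial derivatives vanish at the single point $a$ --- which is exactly the feature that distinguishes Theorem 1.1 from the open-set versions the paper actually proves, so the two methods are not interchangeable. The reduction itself is sound: the chain-rule identity for $g_\omega^{(k)}(0)$, the bound $\|\partial^\alpha f\|_\infty \le C\|(I-\Delta)^N\partial^\alpha f\|_2$ for $4N>n$, and the even/odd splitting with the Cauchy--Schwarz interpolation $\||\xi|^{2p+1}\hat f\|_2\le \||\xi|^{2p}\hat f\|_2^{1/2}\||\xi|^{2p+2}\hat f\|_2^{1/2}$ are all correct (granting, as Chernoff does, that $f\in L^2$ so that Plancherel applies).

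The one place you should not wave your hands is the final bookkeeping. As literally stated, the claim that $M_k^{-1/k}$ and $\|\Delta^{\lceil k/2\rceil}f\|_2^{-2/k}$ are ``comparable'' is false in general: $M_k$ is a maximum over a window of indices, and consecutive norms $\|\Delta^m f\|_2$, $\|\Delta^{m+1}f\|_2$ need not have bounded ratio. What saves the argument is that the sequence $c_m=\||\xi|^{2m}\hat f\|_2$ is log-convex (by the same Cauchy--Schwarz step you already use), so either $c_m^{1/2m}$ stays bounded --- in which case $\hat f$ has compact support, $f$ is entire of exponential type, and flat vanishing at $a$ already forces $f\equiv 0$ --- or $c_m$ is eventually increasing and $M_k$ equals the value of $c$ at the top endpoint $m_k$ of the window, with $2m_k=k+O(1)$. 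Then $M_k^{-1/k}=\bigl(c_{m_k}^{-1/(2m_k)}\bigr)^{1+O(1/m_k)}$, and the divergence of $\sum_k M_k^{-1/k}$ follows from the hypothesis exactly by the two elementary facts the paper records as Lemma \ref{series} (the exponent perturbation $a_m\mapsto a_m^{1+j/m}$ and the additive perturbation $K_n\le aM_n+b^n$, the latter absorbing your constants $CA^k$). So the proof is correct, but you must explicitly invoke log-convexity to collapse the maximum to a single index before applying the series lemmas; without that observation the ``standard comparison argument'' does not exist.
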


As Chernoff's theorem is a useful tool in establishing uncertainty principles of Ingham's type, proving analogues of  Theorem 1.1 in contexts other than Euclidean spaces have received considerable attention in recent years.  For noncompact Riemannian symmetric spaces $ X = G/K$,  without any restriction on the rank, the following weaker version of Theorem 1.2 has been proved in Bhowmik-Pusti-Ray \cite{BPR}.

\begin{thm}[Bhowmik-Pusti-Ray]
	\label{thm-BPR}
 Let ${X}=G/K$ be a  noncompact Riemannian symmetric space and let $ \Delta_X $ be the associated Laplace-Beltrami operator. Suppose $f\in C^{\infty}(X)$ satisfies $\Delta_X^mf\in L^2(X)$ for all $m\geq 0$ and $\sum_{m=1}^{\infty}\|\Delta_{X}^mf\|_2^{-\frac{1}{2m}}=\infty.$ If $ f $ vanishes on a non empty open set,  then $f$ is identically zero.
\end{thm}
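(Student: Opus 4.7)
My plan is to reduce the theorem to a one-variable quasi-analyticity problem by pairing the wave propagator $\cos(t\sqrt{-\Delta_X})$ with test functions supported in the open set on which $f$ vanishes, and then to conclude by combining the Denjoy-Carleman theorem on $\R$ with a Holmgren-type unique continuation for the wave equation on $X$.

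Choose a point $x_0$ and a radius $r>0$ so that the geodesic ball $B(x_0,r)$ lies inside the open set where $f$ vanishes. For each $\phi \in C_c^\infty(B(x_0,r/2))$ I would study
\begin{equation*}
F_\phi(t) := \bigl\langle f,\ \cos(t\sqrt{-\Delta_X})\phi \bigr\rangle_{L^2(X)}, \qquad t \in \R.
\end{equation*}
Since $X$ is a complete Riemannian manifold, the wave equation has unit propagation speed, and so $\cos(t\sqrt{-\Delta_X})\phi$ is supported in the set $\{x : d(x,\operatorname{supp}\phi)\leq |t|\}$; for $|t|<r/2$ this support lies in $B(x_0,r)$, which forces $F_\phi \equiv 0$ on the interval $(-r/2,\,r/2)$. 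On the other hand, self-adjointness of $\Delta_X$ combined with the spectral bound $\|\cos(t\sqrt{-\Delta_X})\|_{L^2\to L^2}\leq 1$ yields
\begin{equation*}
|F_\phi^{(2m)}(t)| = \bigl|\langle \Delta_X^m f,\ \cos(t\sqrt{-\Delta_X})\phi\rangle\bigr| \leq \|\Delta_X^m f\|_2\, \|\phi\|_2,
\end{equation*}
with an analogous $\sin$-based bound for odd derivatives. The Carleman divergence hypothesis together with the classical Denjoy-Carleman theorem thus places $F_\phi$ in a quasi-analytic class, and vanishing on an interval forces $F_\phi \equiv 0$ on all of $\R$.

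Letting $\phi$ range over $C_c^\infty(B(x_0,r/2))$ upgrades this to the statement that the wave solution $u(t,\cdot) := \cos(t\sqrt{-\Delta_X})f$, with Cauchy data $(f,0)$, vanishes on the cylinder $\R \times B(x_0,r/2)$. Since $u$ solves $(\partial_t^2 - \Delta_X)u = 0$ and $\Delta_X$ has real-analytic coefficients on the real-analytic manifold $X$, a Holmgren-type unique continuation theorem propagates the vanishing across the time-like lateral boundary of this cylinder, and a standard covering argument exhausting $X$ then yields $u \equiv 0$ on $\R\times X$; setting $t=0$ gives $f\equiv 0$.

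The step I expect to be hardest is this last one: invoking a clean global unique-continuation statement for the wave operator on a noncompact symmetric space of arbitrary rank, and carefully organising the covering/propagation argument so that vanishing reaches every point of $X$. By contrast, the quasi-analyticity core is essentially formal once finite propagation speed and the spectral bound for $\cos(t\sqrt{-\Delta_X})$ are available, both of which are standard for the Laplace-Beltrami operator on a complete Riemannian manifold.
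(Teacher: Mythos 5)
Your argument is essentially correct, but it is a genuinely different route from the one this paper (and its source \cite{BPR}) takes: the statement is quoted here from \cite{BPR}, and the method used both there and in this paper's own Heisenberg analogues (Theorems \ref{cl} and \ref{csl}) is to build a finite positive measure out of the spectral data of $f$ (spherical/Laguerre coefficients), verify the Carleman moment condition, and invoke de Jeu's theorem on density of polynomials in $L^1(d\mu_f)$ together with the inversion formula; no wave equation appears. Your reduction instead pairs $f$ with $\cos(t\sqrt{-\Delta_X})\phi$, uses finite propagation speed to make $F_\phi$ vanish on an interval, the spectral bound $\|\cos(t\sqrt{-\Delta_X})\|_{L^2\to L^2}\le 1$ plus self-adjointness to place $F_\phi$ in a quasi-analytic Denjoy--Carleman class, and then global Holmgren--John (or Tataru) unique continuation on the infinite cylinder $\R\times B(x_0,r/2)$ to conclude $u(0,\cdot)=f\equiv 0$; since symmetric spaces are real-analytic and complete, every ingredient is available, and the light-cone form of Holmgren's theorem makes the ``covering argument'' you worry about automatic ($u(0,x)=0$ whenever $d(x,x_0)<T$ for every $T$). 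Two details you should still pin down: (i) the Carleman hypothesis only bounds the \emph{even} derivatives by $\|\Delta_X^mf\|_2\|\phi\|_2$, so you need the standard interpolation lemmas (cf.\ Lemma \ref{series} here) to get divergence of $\sum M_j^{-1/j}$ over all $j$; (ii) identifying the distributional powers $\Delta_X^mf\in L^2$ with the spectral-theoretic ones requires essential self-adjointness of the powers of $\Delta_X$ on $C_c^\infty(X)$ (Chernoff's own completeness result). What each approach buys: yours is soft and works on any complete real-analytic Riemannian manifold with no symmetric-space structure, at the price of importing unique continuation; the moment/de Jeu approach avoids unique continuation and finite propagation speed entirely, which is why it transfers to the non-elliptic sublaplacian setting of this paper and yields the pointwise refinements (Remark \ref{rem-cher}) needed for the Ingham-type applications.
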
 

Observe that in the above result, the function $ f $ is assumed to vanish on an open set.  Proving an exact analogue of Chernoff's theorem is still open though there are some partial results. Recently in \cite{BPR1} the authors have proved an exact analogue of Chernoff's theorem for $K$-biinvariant functions on the group $ G.$   Under the assumption that $ X $ is of rank one, we have proved an exact analogue of Chernoff's theorem in a joint work with R. Manna \cite{GMT}:

\begin{thm}[Ganguly-Manna-Thangavelu]
	\label{C}
 Let ${X}=G/K$ be a rank one Riemannian symmetric space of noncompact type. Suppose $f\in C^{\infty}(X)$ satisfies $\Delta_X^mf\in L^2(X)$ for all $m\geq 0$ and $\sum_{m=1}^{\infty}\|\Delta_{X}^mf\|_2^{-\frac{1}{2m}}=\infty.$ If   ${H}^lf(eK)=0$ for all $l\geq 0$ then $f$ is identically zero.
\end{thm}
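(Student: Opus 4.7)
The plan is to reduce the problem to a one-dimensional quasi-analyticity statement by exploiting the rank-one geometry, and then apply the classical Denjoy--Carleman theorem on $\R$. The guiding observation is that in rank one the isotropy subgroup $K$ acts transitively on each geodesic sphere around $eK$, so $f$ is determined by the collection of its radial profiles across all $K$-types, and the pointwise hypothesis at $eK$ can be unfolded into a single-variable vanishing condition.

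First, I would use the Helgason Fourier transform $f\mapsto \tilde f(\la,b)$ on $X$. Since $\Delta_X$ acts spectrally as $-(\la^2+|\rho|^2)$, Plancherel yields
\begin{equation*}
\|\Delta_X^m f\|_2^2 = \int_{\mathfrak{a}^*_+\times B} (\la^2+|\rho|^2)^{2m}\, |\tilde f(\la,b)|^2 |c(\la)|^{-2}\, d\la\, db,
\end{equation*}
so the Denjoy--Carleman divergence $\sum_m \|\Delta_X^m f\|_2^{-1/(2m)}=\infty$ transfers to a quasi-analyticity condition on the Fourier side. In geodesic polar coordinates $x=k\exp(tH)\cdot eK$ one expands
\begin{equation*}
f(k\exp(tH)\cdot eK) = \sum_\delta c_\delta(t)\, Y_\delta(k),
\end{equation*}
with $Y_\delta$ an orthonormal basis of $K$-harmonics adapted to the isotropy action and $H$ a unit vector in $\mathfrak{a}$. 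The vanishing $H^l f(eK)=0$ for all $l\geq 0$ should, after careful bookkeeping of the Bessel-type factors forced by smoothness of $f$ at $eK$, imply that every $c_\delta$ has vanishing Taylor series at $t=0$.

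The main step is a one-dimensional Chernoff theorem for each $c_\delta$. In rank one the $\delta$-isotypic radial part of $\Delta_X$ is a Jacobi operator, which the Abel/Jacobi transform intertwines with the constant-coefficient half-line operator $d^2/dt^2-|\rho|^2$. Since the Abel transform is an $L^2$-isometry on each $K$-isotype, the growth of $\|\Delta_X^m f\|_2$ controls the iterated second-derivative norms of the transformed $c_\delta$, preserving the Denjoy--Carleman divergence. The classical Denjoy--Carleman theorem on $\R$ then forces the transformed profile to vanish identically, hence $c_\delta\equiv 0$, and summing over $\delta$ gives $f\equiv 0$.

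The main obstacle will be the translation step: showing that the single-point hypothesis $H^l f(eK)=0$ genuinely implies Taylor vanishing of every $c_\delta$. Because the $Y_\delta$ with nontrivial $\delta$ degenerate at the distinguished direction, only the lowest-order harmonic contributes at each order of differentiation along $H$, so an inductive peeling argument on $|\delta|$ -- using rank-one structure and the commutation relations between $H$ and the angular directions encoded in the formula for $\Delta_X$ in polar coordinates -- is needed to extract Taylor-vanishing uniformly across all $K$-types. A secondary technical point is preservation of the Denjoy--Carleman divergence under the Abel transform; this is handled by its isometry property together with the observation that the difference between $(d^2/dt^2-|\rho|^2)^m$ and $(d^2/dt^2)^m$ consists of lower-order terms controllable by interpolation.
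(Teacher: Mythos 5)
First, a point of reference: this paper does not actually prove Theorem \ref{C}; it is quoted from the separate work \cite{GMT}, so there is no internal proof to measure your argument against. The closest proofs in the paper (Theorems \ref{cl} and \ref{csl}) do share your opening moves -- transfer the Carleman condition to the spectral side and reduce to a lower-dimensional statement via spherical means -- but they then invoke de Jeu's multidimensional moment theorem to get density of polynomials in $L^1(d\mu_f)$ for a measure built on the joint spectrum, rather than an Abel-transform reduction to the classical one-dimensional Denjoy--Carleman theorem. That difference matters, because the paper's route only ever needs to test the measure against polynomials in the spectral variables, which is exactly what the pointwise vanishing hypothesis supplies via the inversion formula; your route needs much stronger pointwise information on the physical side, and that is where the proposal breaks.

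The genuine gap is the step you yourself flag as ``the main obstacle,'' and it cannot be repaired in the form you propose. Writing $f(k\exp(tH)\cdot eK)=\sum_{\delta,j}c_{\delta,j}(t)Y_{\delta,j}(kM)$, the hypothesis $H^lf(eK)=0$ gives, for each $l$, the \emph{single} scalar equation $\sum_{\delta,j}c_{\delta,j}^{(l)}(0)\,Y_{\delta,j}(eM)=0$. Smoothness does force $c_{\delta,j}(t)=O(t^{m_\delta})$ so that only finitely many $\delta$ enter at each order, but that still leaves one equation in many unknowns at each $l$; and the assertion that nontrivial $K$-types ``degenerate at the distinguished direction'' is false (zonal-type harmonics are extremal, not zero, at the base point of $K/M$), so no inductive peeling over $|\delta|$ is available. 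The Euclidean toy model makes the obstruction vivid: $f(x)=x_2e^{-|x|^2}$ on $\R^2$ is smooth, satisfies the Carleman condition for $\Delta$, and has all derivatives along the $x_1$-axis vanishing at the origin, yet $f\not\equiv 0$ and its degree-one harmonic coefficient does not have vanishing Taylor series. So the implication ``directional Taylor vanishing at one point $\Rightarrow$ Taylor vanishing of every $c_\delta$'' is not a local, type-by-type fact; whatever makes the rank-one theorem true must use the hypothesis jointly with the global spectral structure, which your decomposition discards at the outset. Two secondary inaccuracies: the Abel transform is not an $L^2$-isometry on $K$-isotypes (its $L^2$ theory involves the Plancherel density $|c(\lambda)|^{-2}$ nontrivially), and ``the classical Denjoy--Carleman theorem'' is stated for sup-norm derivative bounds, whereas you have $L^2$ bounds on powers of a second-order operator -- you would need a genuine one-dimensional Chernoff/Carleman-moment argument there, not Denjoy--Carleman as such.
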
 

In the above, $ H $ is any nonzero element of the one dimensional Lie algebra $ \mathfrak{a} $ occurring in the Iwasawa decomposition $ \mathfrak{g} = \mathfrak{k} \oplus \mathfrak{a} \oplus \mathfrak{n}.$

In view of the above results, it is an interesting problem to study Chernoff's theorem for the sublaplacian $ \mathcal{L}$ on the Heisenberg group $ \He^n.$   The following version  of Chernoff's theorem  has been proved in \cite{BGST}.

\begin{thm} [Bagchi-Ganguly-Sarkar-Thangavelu]
Let $f$ be a smooth function on $\mathbb{H}^n$  satisfying  $ f(z,t) = f_0( |(z,t)|) $ where $ |(z,t)| = (|z|^4+ t^2)^{1/4} $ is the Koranyi norm on $ \He^n.$ 
 Assume that $\mathcal{L}^mf\in L^2(\mathbb{H}^n)$ for all $m\in \mathbb{N}$ and $\sum_{m=1}^{\infty}\|\mathcal{L}^mf\|_2^{-\frac{1}{2m}}=\infty$. If $f$ and  all its partial derivatives vanish at $0$, then $f$ is identically zero.
\end{thm}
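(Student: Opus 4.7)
The natural approach is Fourier--Laguerre analysis adapted to the Heisenberg group. First I would take the partial Fourier transform in the central variable, $f^{\lambda}(z) := \int_{\mathbb{R}} f(z,t)\, e^{i \lambda t}\, dt$ for $\lambda \in \mathbb{R}\setminus\{0\}$. The standard intertwining $(\mathcal{L} f)^{\lambda} = H(\lambda)\, f^{\lambda}$, where $H(\lambda) = -\Delta_{z} + \lambda^{2}|z|^{2}$ is the scaled Hermite operator on $\mathbb{C}^{n}$, transports iterates of $\mathcal{L}$ into iterates of $H(\lambda)$. Since the Koranyi norm is $U(n)$-invariant in $z$, each $f^{\lambda}$ is radial in $z$ and admits a Laguerre expansion
\[
f^{\lambda}(z) \;=\; \sum_{k=0}^{\infty} R_{k}(f,\lambda)\, \varphi_{k}^{\lambda}(z),
\qquad H(\lambda)\,\varphi_{k}^{\lambda} = (2k+n)|\lambda|\, \varphi_{k}^{\lambda},
\]
where $\varphi_{k}^{\lambda}(z) = L_{k}^{n-1}\!\bigl(\tfrac{1}{2}|\lambda||z|^{2}\bigr)\, e^{-|\lambda||z|^{2}/4}$ is the scaled Laguerre function.

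Plancherel on $\mathbb{H}^{n}$ then gives an identity of the form
\[
\|\mathcal{L}^{m} f\|_{2}^{2} \;=\; c_{n}\sum_{k=0}^{\infty} \binom{k+n-1}{k} \int_{\mathbb{R}} \bigl((2k+n)|\lambda|\bigr)^{2m}\, |R_{k}(f,\lambda)|^{2}\, |\lambda|^{n}\, d\lambda,
\]
so that, for each fixed $k$, the weighted moments $\bigl\| |\lambda|^{m} h_{k}\bigr\|_{L^{2}(\mathbb{R})}$ of $h_{k}(\lambda) := R_{k}(f,\lambda)\,|\lambda|^{n/2}$ are bounded by $C_{k}(2k+n)^{-m}\|\mathcal{L}^{m}f\|_{2}$. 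Carleman's theorem on quasi-analytic classes, combined with the hypothesis $\sum_{m}\|\mathcal{L}^{m}f\|_{2}^{-1/(2m)}=\infty$, then places each $h_{k}$ in a Denjoy--Carleman quasi-analytic class on $\mathbb{R}$.

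The third step is to extract a vanishing condition for the $h_{k}$'s. Since $f$ is smooth and Koranyi-radial, there is a smooth profile $g$ on $[0,\infty)$ with $f(z,t) = g(|z|^{4}+t^{2})$; unwinding the chain rule along the $t$-axis, the vanishing of $f$ and all its partial derivatives at the origin forces $g^{(j)}(0) = 0$ for every $j \ge 0$. Substituting this form into the definition of $R_{k}(f,\lambda)$ and analysing the $\lambda$-dependence by the scaling $z \mapsto |\lambda|^{-1/2} z$, together with the generating relation for Laguerre polynomials, should yield an explicit one-variable integral whose Taylor coefficients at $\lambda = 0$ are linear combinations of the $g^{(j)}(0)$, all of which vanish. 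Quasi-analyticity then forces $h_{k} \equiv 0$ for each $k$, whence $f^{\lambda} \equiv 0$ for almost every $\lambda$, and Fourier inversion in $t$ gives $f \equiv 0$.

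The main obstacle I expect is this third step: carefully showing that pointwise vanishing of $f$ and all its partial derivatives at the single Heisenberg origin propagates to infinite-order vanishing at $\lambda = 0$ of every Laguerre coefficient. The Koranyi-radial hypothesis is indispensable here, since it collapses the problem to the single smooth profile $g$ of one real variable; without it, even $U(n)$-radiality in $z$ would leave genuinely independent functions of $|z|^{2}$ in the Taylor expansion of $R_{k}(f,\cdot)$ at $\lambda = 0$, and the reduction to a scalar quasi-analytic problem would fail.
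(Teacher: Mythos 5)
This paper does not actually prove the quoted theorem: it is imported verbatim from \cite{BGST}, so there is no internal proof to compare against, and your proposal has to stand on its own. Your first two steps are the standard and correct setup shared by \cite{BGST} and by Section~3 of this paper: partial Fourier transform in $t$, the Laguerre expansion of the $z$-radial functions $f^{\lambda}$ with eigenvalues $(2k+n)|\lambda|$, and the Plancherel bound $\int_{\mathbb{R}} \bigl((2k+n)|\lambda|\bigr)^{2m}|R_k(f,\lambda)|^2|\lambda|^n d\lambda \leq C_k\|\mathcal{L}^m f\|_2^2$ (modulo the slip that the operator intertwined with $\mathcal{L}$ is the special Hermite operator $L_\lambda=-\Delta_{\mathbb{C}^n}+\tfrac14\lambda^2|z|^2+i\lambda N$, not the Hermite operator $-\Delta+\lambda^2|x|^2$ on $\mathbb{R}^n$; on $z$-radial functions this does not matter).

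The argument breaks at the third step, in two independent places. First, the bounds you obtain are $L^2(d\lambda)$ moments of $h_k$, i.e.\ moments of the measure $|h_k(\lambda)|^2\,d\lambda$. By Carleman's theorem these make the \emph{Fourier transform} of $|h_k|^2$ --- a function of the dual, spatial variable $t$ --- quasi-analytic; they give no control whatsoever on $\partial_\lambda^m h_k$, so they do not place $h_k$ itself in a Denjoy--Carleman class in $\lambda$. (A smooth $h_k$ supported in $[1,2]$ has all moments bounded by $4^m\|h_k\|_2^2$, trivially satisfying Carleman, yet is not quasi-analytic.) Second, even granting quasi-analyticity of $h_k$ in $\lambda$, the vanishing of $f$ and all its partial derivatives at the group identity does not give vanishing of the Taylor coefficients of $\lambda\mapsto R_k(f,\lambda)$ at $\lambda=0$: since $\partial_\lambda^{\,j} f^{\lambda}(z)\big|_{\lambda=0}=\int_{\mathbb{R}}(it)^j f(z,t)\,dt$, those coefficients are \emph{global} $t$-moments of $f$ paired against Laguerre functions, and are in no way determined by the jet of $f$ at $(0,0)$ or by the derivatives $g^{(j)}(0)$ of your profile. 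You have paired frequency-side quasi-analyticity with frequency-side vanishing, and neither follows from the hypotheses. The working mechanism (see the proof of Theorem~\ref{cl} in this paper for $\Delta_\alpha$, and \cite{BGST}) is space-side: the Carleman condition makes polynomials dense in $L^1$ of a spectral measure built from $|R_{k,\lambda}(f)|$, while the vanishing of all derivatives at the origin, inserted into the inversion formula at $(0,0)$ (using $e_{k,\lambda}^{n-1}(0,0)=1$), shows that $\overline{R_{k,\lambda}(f)}$ is orthogonal to every admissible polynomial, forcing $\|f\|_2^2=0$. For $\mathcal{L}$ alone the admissible polynomials depend only on the single variable $(2k+n)|\lambda|$ while the spectrum is the two-dimensional Heisenberg fan, and the Koranyi-radial hypothesis is what \cite{BGST} exploits to collapse this to a genuinely one-variable density problem; your proposed use of that hypothesis (Taylor expansion of the profile at the origin transferred to $\lambda=0$) does not supply that reduction.
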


Observe that as in the case of  symmetric spaces of arbitrary rank studied in \cite{BPR1} we have also imposed an extra condition on $ f.$  It is still an open problem to prove the above result without the  extra assumption on $ f.$
However, in this paper we consider the full Laplacian $ \Delta_{\He}$ instead of $ \mathcal{L} $ and  prove  the following version which is the analogue of Theorem 1.2 in our context.

\begin{thm}\label{cher} 
Let $f$ be a smooth function on $\mathbb{H}^n$ such that  $\Delta_{\He}^mf\in L^2(\mathbb{H}^n)$ for all $m\in \mathbb{N}$ and $\sum_{m=1}^{\infty}\|\Delta_{\He}^mf\|_2^{-\frac{1}{2m}}=\infty$. If $f$ vanishes on a nonempty open set, then $f$ is identically zero.
\end{thm}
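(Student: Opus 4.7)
The decisive structural feature I would exploit is that, unlike the sublaplacian $\mathcal{L}$, the full Laplacian $\Delta_{\He}$ is a \emph{globally elliptic} differential operator on the underlying smooth manifold $\He^n \cong \R^{2n+1}$; indeed a direct coordinate computation gives
\[
 -\Delta_{\He} \;=\; \sum_{j=1}^n(X_j^2+Y_j^2) + T^2 \;=\; \Delta_z + \bigl(1+\tfrac{|z|^2}{4}\bigr)\partial_t^2 + N\partial_t,
\]
whose principal symbol $|\zeta|^2 + (1+|z|^2/4)\tau^2$ is pointwise positive definite. Combined with the self-adjoint positivity of $\Delta_{\He}$ on $L^2(\He^n)$ and its left-invariance, this should suffice to reduce Theorem~\ref{cher} to the classical one-dimensional Denjoy--Carleman theorem.

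\textbf{Step 1: iterated interior elliptic estimates.} For $K \subset\subset K' \subset \He^n$ relatively compact, the standard elliptic estimate $\|u\|_{H^{s+2}(K)} \leq C(\|\Delta_{\He} u\|_{H^s(K')} + \|u\|_{L^2(K')})$ has a constant $C$ independent of $s$, because the polynomial coefficients of $\Delta_{\He}$ have all derivatives uniformly bounded on $K'$. Iterating $m$ times against a nested shrinking sequence of compact sets, and absorbing intermediate terms via the spectral-theoretic interpolation $\|\Delta_{\He}^j f\|_2 \leq \|f\|_2^{1-j/m}\|\Delta_{\He}^m f\|_2^{j/m}$ (valid because $\Delta_{\He} \geq 0$ is essentially self-adjoint), I expect
\[
 \|f\|_{H^{2m}(K)} \;\leq\; C^m\bigl(\|\Delta_{\He}^m f\|_2 + \|f\|_2\bigr),
\]
with $C$ depending only on $K$. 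Left-invariance of both $\Delta_{\He}$ and Haar measure keeps $C$ uniform over translates of a fixed compact neighborhood of the identity. The Sobolev embedding $H^{2m}(K)\hookrightarrow C^{2m-n-1}(K)$ then yields, for any multi-index $\alpha$ with $|\alpha| = k$ and $m(k) := \lceil (k+n+1)/2 \rceil$,
\[
 \sup_{\He^n} |\partial^\alpha f| \;\leq\; \widetilde C^{\,k}\bigl(\|\Delta_{\He}^{m(k)} f\|_2 + \|f\|_2\bigr).
\]

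\textbf{Step 2: reduction to one dimension and Denjoy--Carleman.} Fix a point $p \in U$ and an arbitrary $q \in \He^n$. Since $f \equiv 0$ in a neighborhood of $p$, every partial derivative of $f$ vanishes at $p$. Choose a smooth curve $\gamma: [0,1]\to\He^n$ from $p$ to $q$ and set $g(s) = f(\gamma(s))$. The chain rule combined with Step~1 yields
\[
 \sup_{s\in[0,1]}|g^{(k)}(s)| \;\leq\; M_k \;:=\; (C_\gamma \widetilde C)^k\bigl(\|\Delta_{\He}^{m(k)}f\|_2 + \|f\|_2\bigr),
\]
while $g^{(k)}(0) = 0$ for every $k \geq 0$. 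Because $k \sim 2m(k)$ as $k \to \infty$, the hypothesis $\sum_m \|\Delta_{\He}^m f\|_2^{-1/(2m)} = \infty$ transfers, up to a constant multiple and a finite shift of indices, to $\sum_k M_k^{-1/k} = \infty$. The classical one-dimensional Denjoy--Carleman theorem applied to $g$ on $[0,1]$ then forces $g \equiv 0$. Hence $f(q) = 0$, and since $q$ was arbitrary, $f \equiv 0$ on $\He^n$.

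\textbf{Main obstacle.} The crux lies entirely in Step~1: one must produce the $C^m$ (not $m!\,C^m$) growth of the elliptic-regularity constant, because any extra factorial factor would destroy the eventual Denjoy--Carleman divergence. This seems to rule out a naive composition of local elliptic estimates and forces the use of the spectral interpolation inequality across the full family $\{\Delta_{\He}^j\}_{j=0}^m$. Noncompactness of $\He^n$ is mild thanks to left-invariance; the growing polynomial coefficients of $\Delta_{\He}$ cause no trouble once one restricts to a compact neighborhood of the identity and then translates, but they are precisely the reason one must work locally rather than invoke any global uniform elliptic estimate.
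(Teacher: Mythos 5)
Your Step 1 contains the decisive gap, and you have correctly located it but not closed it. The interior elliptic estimate $\|u\|_{H^{s+2}(K)} \leq C\big(\|\Delta_{\He}u\|_{H^s(K')} + \|u\|_{L^2(K')}\big)$ has a constant that depends not only on the coefficients but, essentially, on $\mathrm{dist}(K,\partial K')^{-2}$, since it is produced by differentiating a cutoff equal to $1$ on $K$ and supported in $K'$. Iterating $m$ times forces you to insert $m$ nested sets between $K$ and a fixed $K'$, so the gaps have size $d/m$ and the product of constants is of order $(Cm^2/d^2)^m\sim C^m(m!)^2$ --- exactly the factorial loss you acknowledge must be avoided. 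The spectral interpolation $\|\Delta_{\He}^j f\|_2 \leq \|f\|_2^{1-j/m}\|\Delta_{\He}^m f\|_2^{j/m}$ does not repair this: it controls the intermediate global $L^2$ norms of powers of $\Delta_{\He}$ appearing on the right of the iterated estimates, but the offending factors are the elliptic-regularity constants themselves, which depend on the shrinking geometry and not on $f$. This is precisely why Kotake--Narasimhan-type iteration works in the analytic class (where the $(2m)!$ on the right absorbs the loss) but fails for quasi-analytic Carleman classes. No version of $\|f\|_{H^{2m}(K)}\leq C^m(\|\Delta_{\He}^mf\|_2+\|f\|_2)$ is established by your sketch, and without it Step 2 has nothing to feed into Denjoy--Carleman. (A secondary point: the principal symbol of $\Delta_{\He}$ is $\sum_j(\xi_j+\tfrac12 y_j\tau)^2+\sum_j(\eta_j-\tfrac12 x_j\tau)^2+\tau^2$, not $|\zeta|^2+(1+|z|^2/4)\tau^2$, because of the cross term $N\partial_t$; the operator is still elliptic, so this does not affect your logic.)

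For comparison, the paper avoids local elliptic regularity entirely and argues spectrally: it forms the spherical means $F_z(r,t)=f\ast\mu_r(z,t)$, notes that $F_z$ vanishes near $(0,0)$ and satisfies $\|\Delta_{n-1}^m F_z\|_{L^2(S)}\leq c_n\|\Delta_{\He}^m f\|_2$, proves a Chernoff theorem for the two-variable generalized Laplacian $\Delta_\alpha$ by converting the Carleman condition into a moment condition for a measure supported on the Heisenberg fan and invoking de Jeu's theorem on density of polynomials, and finally uses real analyticity of the eigenfunctions $f^\lambda\ast_\lambda\varphi^{n-1}_{k,\lambda}$ of the elliptic operator $L_\lambda$ to conclude $f=0$. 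If you wish to salvage an ellipticity-based proof, you would need a genuinely global a priori inequality with single-exponential constants rather than an iteration of local estimates, and your argument does not supply one.
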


As an application of this result, we are able to strengthen the Ingham's theorem proved in \cite{BGST}.  In Theorem 1.3 in \cite{BGST} we have investigated the  admissible decay of the Fourier transform $ \hat{f}(\lambda) $ of a nontrivial function $ f $ on $ \He^n.$ As $ \hat{f}(\lambda) $ is operator valued, the decay is measured in terms of the Hermite operator $ H(\lambda) $ in the following form:
\begin{equation}\label{Ing-decay} 
 \hat{f}(\lambda)^\ast \hat{f}(\lambda) \leq C e^{- 2 \sqrt{H(\lambda)} \,\Theta(\sqrt{H(\lambda))}} 
 \end{equation}
for a non-negative function $ \Theta $ defined  on $ [0,\infty).$ More precisely, the following theorem has been proved.

\begin{thm}\label{ingh-hei}
Let $ \Theta(\lambda) $ be a nonnegative  function on $ [0,\infty)$ such that $ \Theta(\lambda) $ decreases to zero when $ \lambda \rightarrow \infty $  and satisfies the condition $ \int_1^\infty \Theta(t) t^{-1} dt <\infty.$  Then there exists a nonzero compactly supported continuous function $ f $ on $ \He^n$ whose  Fourier transform $ \widehat{f} $ satisfies  the estimate (\ref{Ing-decay})
 Conversely, for any nontrivial integrable function $f $ vanishing on a neighbourhood of zero satisfying  the extra assumption   $ f(z,t) = f_0( |(z,t)|), $  the estimate (\ref{Ing-decay}) cannot hold unless  $ \int_1^\infty \Theta(t) t^{-1} dt <\infty .$ 
\end{thm}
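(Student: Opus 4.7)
The statement is a classical Ingham-style dichotomy: when $\int_1^\infty \Theta(t)\,t^{-1}dt<\infty$, the prescribed decay (\ref{Ing-decay}) is realisable by a nontrivial compactly supported $f$; when the integral diverges, it is forbidden under the stated mild regularity of $f$. The plan is to obtain the construction by transplanting the classical one-variable Ingham function via the wave-equation functional calculus of $\sqrt{\mathcal{L}}$, and to handle the converse by reducing, through Plancherel on $\He^n$, to the Chernoff theorem just proved as Theorem~\ref{cher}.

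For the construction, I first invoke the classical Ingham/Paley-Wiener result: the convergence of $\int_1^\infty \Theta(t)\,t^{-1}dt$ produces a nontrivial even compactly supported continuous function $\phi$ on $\R$ with $|\hat\phi(\xi)|\leq C\,e^{-|\xi|\Theta(|\xi|)}$ for all $\xi\in\R$. Fix a nonzero smooth compactly supported $U(n)$-invariant function $g$ on $\He^n$ (so $\hat g(\lambda)$ is a function of $H(\lambda)$ in the Hermite functional calculus) and define
\[
f(x)\;=\;\int_{\R}\phi(s)\,\bigl(\cos(s\sqrt{\mathcal{L}})g\bigr)(x)\,ds.
\]
Finite propagation speed for $\cos(s\sqrt{\mathcal L})$ on $\He^n$ combined with the compact supports of $\phi$ and $g$ makes $f$ compactly supported and continuous. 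Under the group Fourier transform, $\cos(s\sqrt{\mathcal{L}})\leftrightarrow \cos(s\sqrt{H(\lambda)})$ by functional calculus; integrating against the even $\phi$ produces $\hat f(\lambda)=\hat\phi(\sqrt{H(\lambda)})\,\hat g(\lambda)$, and commutativity of $\hat g(\lambda)$ with functions of $H(\lambda)$ yields
\[
\hat f(\lambda)^{\ast}\hat f(\lambda)\;=\;\hat g(\lambda)^{\ast}\hat g(\lambda)\,\hat\phi(\sqrt{H(\lambda)})^{2}\;\leq\; C\,\|g\|_{1}^{2}\,e^{-2\sqrt{H(\lambda)}\,\Theta(\sqrt{H(\lambda)})}.
\]
Real-analyticity of $\hat\phi$ prevents $\hat\phi(\sqrt{H(\lambda)})$ from vanishing on the joint spectrum, so $f$ is nontrivial.

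For the converse, I argue contrapositively: assume $f$ is nontrivial, integrable, Koranyi-radial, vanishes on a neighbourhood of $0$, satisfies (\ref{Ing-decay}), and that $\int_1^\infty\Theta(t)\,t^{-1}dt=\infty$; I will verify the hypotheses of Theorem~\ref{cher} to force $f\equiv 0$, contradicting nontriviality. Koranyi-radiality gives $U(n)$-invariance, so $\hat f(\lambda)=\sum_{k\geq 0}R_k(\lambda)P_k(\lambda)$ with $|R_k(\lambda)|^2\leq C e^{-2s_k\Theta(s_k)}$ where $s_k=\sqrt{(2k+n)|\lambda|}$. Since $-\Delta_{\He}$ is intertwined with $H(\lambda)+\lambda^{2}$ by the group Fourier transform, Plancherel gives
\[
\|\Delta_{\He}^{m}f\|_{2}^{2}\;=\;c_n\sum_{k\geq 0}\binom{k+n-1}{n-1}\int_{\R}(s_k^{2}+\lambda^{2})^{2m}\,|R_k(\lambda)|^{2}\,|\lambda|^{n}\,d\lambda.
\]
Optimising $s^{4m}e^{-2s\Theta(s)}$ in $s$, similarly in $\lambda$, and geometrically summing the exponential tails produces a bound of the form $\|\Delta_{\He}^{m}f\|_{2}\leq (C\,\Psi(m))^{2m}$, with $\Psi$ expressed through the inverse of $s\mapsto s\Theta(s)$. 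The classical Levinson-Mandelbrojt lemma then equates divergence of $\int_1^\infty\Theta(t)\,t^{-1}dt$ with divergence of $\sum_m\|\Delta_{\He}^{m}f\|_2^{-1/(2m)}$; since $f$ also vanishes on an open set, Theorem~\ref{cher} yields $f\equiv 0$.

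The principal obstacle is the final Plancherel estimate: the Heisenberg Plancherel weight $|\lambda|^n$, the Hermite multiplicity $k^{n-1}$, and the polynomial envelope $(s_k^2+\lambda^2)^{2m}$ must combine cleanly with the exponential decay of $|R_k(\lambda)|^2$ so as to produce a Carleman sequence whose divergent Carleman sum is equivalent to the divergent Dini integral in $\Theta$, with no loss of exponent. Keeping track of the coupling between the central variable $\lambda$ and the Hermite eigenvalue $(2k+n)|\lambda|$ is the core technical point, and is precisely where the Koranyi-radial hypothesis is consumed.
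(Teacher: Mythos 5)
Your forward construction (transplanting the one--variable Ingham function through $\cos(s\sqrt{\mathcal L})$ and finite propagation speed) is fine in spirit and is essentially how the existence half is obtained in the cited source; the paper itself does not reprove it but refers to \cite[Theorem 4.5]{BGST}.

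The converse, however, has a genuine gap: you route it through $\Delta_{\He}$ and Theorem \ref{cher}, but the hypothesis (\ref{Ing-decay}) is too weak for that reduction. On the Fourier side $\Delta_{\He}$ corresponds to $\lambda^2+H(\lambda)$, so $\|\Delta_{\He}^m f\|_2^2$ carries the weight $(\lambda^2+(2k+n)|\lambda|)^{2m}$, which contains the pure term $\lambda^{4m}$. The decay (\ref{Ing-decay}) controls only $\sqrt{H(\lambda)}=\sqrt{(2k+n)|\lambda|}$ and gives \emph{no} decay in $\lambda$ alone: on the $k=0$ stratum the available bound is $e^{-2\sqrt{n|\lambda|}\,\Theta(\sqrt{n|\lambda|})}$, and integrating $\lambda^{4m}$ against this (substitute $u=\sqrt{|\lambda|}$ and apply the analogue of Lemma \ref{ing-lem} with index $8m$) yields $\|\Delta_{\He}^m f\|_2 \leq \big(Cm/\Theta(Cm^4)\big)^{4m}$ rather than the exponent $2m$ you assert. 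Consequently $\|\Delta_{\He}^m f\|_2^{-1/(2m)} \geq \big(\Theta(Cm^4)/(Cm)\big)^{2}$, and since $\Theta$ is bounded this series \emph{converges} regardless of whether $\int_1^\infty \Theta(t)t^{-1}dt$ diverges; the Carleman condition for $\Delta_{\He}$ cannot be verified from (\ref{Ing-decay}). This is precisely why the paper introduces the strengthened condition (\ref{Ing-decay-mod}), whose extra factor $e^{-2|\lambda|\Theta(|\lambda|)}$ tames the $\lambda^{4m}$ term, before it can invoke Theorem \ref{cher} (see the proof of Theorem \ref{ingh-hei-1}, estimates (\ref{est-2})--(\ref{est-3})). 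For Theorem \ref{ingh-hei} itself the correct route — the one taken in \cite{BGST} — is to stay with the sublaplacian $\mathcal L$, whose spectrum $(2k+n)|\lambda|$ is fully controlled by (\ref{Ing-decay}), and to apply the Chernoff theorem for $\mathcal L$ (Theorem 1.4 above), which is exactly where the Koranyi-radial hypothesis $f(z,t)=f_0(|(z,t)|)$ is consumed. Your proposal uses that hypothesis only to get $U(n)$-invariance, which both misreads its role and, if the argument worked, would prove the radiality-free Theorem \ref{ingh-hei-1} under the weaker decay (\ref{Ing-decay}) — a statement the paper explicitly does not claim.
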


 In this paper we show that the extra condition  on $ f $ can be dispensed with if we slightly strengthen the condition (\ref{Ing-decay}).  Let $ g $ be a function on $ \R $ whose Euclidean Fourier transform satisfies the estimate
$ |\hat{g}(\lambda)| \leq C e^{-|\lambda|\,\Theta(|\lambda|)} $ for all $ \lambda \in \R.$  If $ f_0 $ satisfies (\ref{Ing-decay}), then the function $ f(z,t) = \int_{-\infty}^\infty f_0(z,t-s)g(s) ds $ satisfies the condition
\begin{equation}\label{Ing-decay-mod}
 \hat{f}(\lambda)^\ast \hat{f}(\lambda) \leq C \, e^{-2|\lambda|\,\Theta(|\lambda|)}e^{-2 \sqrt{H(\lambda)} \,\Theta(\sqrt{H(\lambda))}} .
\end{equation}
By combining the first part of Theorem \ref{ingh-hei} and the classical theorem of Ingham it is not difficult to prove the following result.
\begin{thm}\label{ingh-easy} Let $ \Theta(\lambda) $ be a nonnegative  function on $ [0,\infty)$ such that $ \Theta(\lambda) $ decreases to zero when $ \lambda \rightarrow \infty .$ Then there exists a nonzero compactly supported continuous function $ f $ on $ \He^n$ whose  Fourier transform $ \widehat{f}(\lambda) $ satisfies  the estimate (\ref{Ing-decay-mod}) if and only if $ \int_1^\infty \Theta(t) t^{-1} dt <\infty.$
\end{thm}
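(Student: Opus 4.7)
The plan is to establish both implications by reducing to the first part of Theorem \ref{ingh-hei} together with the classical Ingham theorem on $\R$.

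For sufficiency, assuming $\int_1^\infty \Theta(t)t^{-1}dt<\infty$, I would take $f_0$ as provided by the first part of Theorem \ref{ingh-hei}: a nonzero compactly supported continuous function on $\He^n$ with $\hat{f_0}(\lambda)^\ast\hat{f_0}(\lambda)\leq Ce^{-2\sqrt{H(\lambda)}\Theta(\sqrt{H(\lambda))}}$, and take $g$ as provided by classical Ingham on $\R$: a nonzero compactly supported continuous function on $\R$ with $|\hat{g}(\lambda)|\leq Ce^{-|\lambda|\Theta(|\lambda|)}$. Setting
\[
f(z,t)\,=\,\int_{\R} f_0(z,t-s)\,g(s)\,ds
\]
yields a compactly supported, continuous function which is nontrivial by Titchmarsh's convolution theorem applied to any nontrivial $t$-section of $f_0$. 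Since $\pi_\lambda(0,s)=e^{i\lambda s}I$, this convolution in the central variable translates on the Fourier side into scalar multiplication, i.e.\ $\hat{f}(\lambda)=\hat{g}(\lambda)\hat{f_0}(\lambda)$, and
\[
\hat{f}(\lambda)^\ast\hat{f}(\lambda) = |\hat{g}(\lambda)|^2 \hat{f_0}(\lambda)^\ast\hat{f_0}(\lambda) \leq C e^{-2|\lambda|\Theta(|\lambda|)} e^{-2\sqrt{H(\lambda)}\Theta(\sqrt{H(\lambda))}},
\]
which is exactly (\ref{Ing-decay-mod}).

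For necessity, suppose $f$ is nonzero, compactly supported, continuous, and satisfies (\ref{Ing-decay-mod}). Since $\Theta\geq 0$, the operator $e^{-\sqrt{H(\lambda)}\Theta(\sqrt{H(\lambda))}}$ has operator norm at most one, so (\ref{Ing-decay-mod}) yields the scalar estimate $\|\hat{f}(\lambda)\|_{\mathrm{op}}\leq Ce^{-|\lambda|\Theta(|\lambda|)}$. To extract a one-dimensional function, I would use continuity and nontriviality of $f$ to choose $h\in C_c^\infty(\C^n)$ so that
\[
g(t)\,:=\,\int_{\C^n}f(z,t)\,\overline{h(z)}\,dz
\]
is a nonzero compactly supported continuous function of $t$. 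Using Plancherel for the Weyl transform together with the factorisation $\hat{f}(\lambda)=W_\lambda(f^\lambda)$ where $f^\lambda(z)=\int f(z,t)e^{i\lambda t}\,dt$,
\[
\hat{g}(-\lambda)\,=\,\int_{\C^n} f^\lambda(z)\,\overline{h(z)}\,dz\,=\,c_n|\lambda|^n\,\tr\bigl(\hat{f}(\lambda)W_\lambda(h)^\ast\bigr),
\]
whence $|\hat{g}(\lambda)|\leq c_n|\lambda|^n \|\hat{f}(\lambda)\|_{\mathrm{op}}\|W_\lambda(h)\|_1$.

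For $h\in C_c^\infty(\C^n)$ the trace norm $\|W_\lambda(h)\|_1$ is polynomially bounded in $|\lambda|$, producing $|\hat{g}(\lambda)|\leq C_h(1+|\lambda|)^N e^{-|\lambda|\Theta(|\lambda|)}$ for $|\lambda|\geq 1$. The polynomial prefactor is absorbed by replacing $\Theta$ with $\Theta'(r):=\max\{0,\Theta(r)-(N+1)\log(1+r)/(1+r)\}$; the integral $\int_1^\infty \Theta'(t)t^{-1}dt$ converges or diverges in step with $\int_1^\infty \Theta(t)t^{-1}dt$, the correction term being integrable. Classical Ingham applied to $g$ then forces $g\equiv 0$ in the divergent case, contradicting the choice of $h$. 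The main obstacle, and the one nontrivial analytic ingredient beyond bookkeeping, is the polynomial trace-norm bound $\|W_\lambda(h)\|_1\leq C_h(1+|\lambda|)^M$ for $h\in C_c^\infty(\C^n)$; this can be obtained by writing $h$ as a twisted convolution $h=h_1\natural_\lambda h_2$ of two Schwartz functions and using $\|W_\lambda(h_i)\|_{\mathrm{HS}}^2=(2\pi)^n|\lambda|^{-n}\|h_i\|_2^2$ together with $\|AB\|_1\leq\|A\|_{\mathrm{HS}}\|B\|_{\mathrm{HS}}$, or alternatively by estimating the smooth integral kernel of $W_\lambda(h)$ in Sobolev norms.
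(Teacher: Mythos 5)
Your sufficiency argument is exactly the paper's: convolve in the central variable a Heisenberg function with the $\sqrt{H(\lambda)}$-decay (from the first part of Theorem \ref{ingh-hei}) with a one-dimensional Ingham function, so that direction matches. For necessity, however, you take a genuinely different route from the paper, and as written it has a gap. The paper pairs $f$ against $\varphi\in L^2(\C^n)$, bounds $|\widehat{f_\varphi}(\lambda)|\le \|\varphi\|_2\,|\lambda|^{n/2}\|\hat f(\lambda)\|_{HS}$ by Cauchy--Schwarz and the Weyl--Plancherel formula, and then uses \emph{both} exponential factors in (\ref{Ing-decay-mod}): under the auxiliary normalization $\Theta(\lambda)\ge c\lambda^{-1/2}$ the Hermite sum $\sum_k \binom{k+n-1}{k}e^{-2\sqrt{(2k+n)|\lambda|}\,\Theta(\cdot)}$ is $O(|\lambda|^{-n})$, which exactly cancels the Plancherel factor and yields $|\widehat{f_\varphi}(\lambda)|\le Ce^{-|\lambda|\Theta(|\lambda|)}$ with \emph{no polynomial loss}; classical Ingham then applies with the original decreasing $\Theta$, and the auxiliary assumption is removed by mollifying with the $g_\delta$ built from $\theta(\lambda)=(1+\lambda^2)^{-1/4}$. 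You instead discard the second factor, keep only $\|\hat f(\lambda)\|_{\mathrm{op}}\le Ce^{-|\lambda|\Theta(|\lambda|)}$, and pay for it with the factor $|\lambda|^n\|W_\lambda(h)\|_1$, which grows polynomially.

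The gap is in absorbing that polynomial. Your $\Theta'(r)=\max\{0,\Theta(r)-(N+1)\log(1+r)/(1+r)\}$ does have a divergent logarithmic integral, but it is not decreasing, and monotonicity is a hypothesis of Ingham's theorem as stated in \cite{I} (it is used, e.g., in the moment estimate of Lemma \ref{ing-lem}). Worse, the defect is not merely formal: there exist decreasing $\Theta\to 0$ with $\int_1^\infty\Theta(t)t^{-1}dt=\infty$ but $\liminf_{r\to\infty} r\Theta(r)/\log r=0$ (take $\Theta=\epsilon_k$ on $[a_k,a_{k+1})$ with $\epsilon_k=\log(a_k)/a_k^2$ and $a_{k+1}=e^{a_k^2}$); for such $\Theta$ the bound $C(1+|\lambda|)^N e^{-|\lambda|\Theta(|\lambda|)}$ exceeds $\|g\|_1$ on an unbounded set of frequencies, and no nonnegative decreasing function with divergent integral can sit below $\Theta(r)-N\log(1+r)/r$. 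Two repairs are available: first normalize $\Theta$ from below by $c_\delta|\lambda|^{-1/2}$ via the paper's $g_\delta$-mollification (then $N\log(1+r)/r\le\frac12\Psi_\delta(r)$ eventually and you may use the decreasing function $\frac12\Psi_\delta$), or replace classical Ingham by the Levinson--Beurling log-integral theorem, which needs no monotonicity. You invoke neither. A secondary soft spot: the trace-norm bound $\|W_\lambda(h)\|_1\le C_h(1+|\lambda|)^M$ is true for $|\lambda|\ge1$ (e.g.\ via $\|W_\lambda(h)\|_1\le\|W_\lambda(\mathcal{L}_\lambda^j h)\|_{HS}\,\|H(\lambda)^{-j}\|_{HS}$ for $2j>n$), but your primary suggested proof --- factoring $h$ as a twisted convolution of two Schwartz functions --- is problematic because $\ast_\lambda$ depends on $\lambda$, so the factorization and its norms would have to be controlled uniformly in $\lambda$, which is not addressed.
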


The proof of this theorem which will be presented in Section 4 is not difficult. Thus, if $ \int_1^\infty \Theta(t) t^{-1} dt = \infty,$ then we cannot have any non trivial function $ f $ with compact support whose Fourier transform satisfies (\ref{Ing-decay-mod}). However, if we only assume that $ f $ vanishes on a nonempty open set, then proving that $ f $ is identically zero is much more difficult. We need to make use of the full power of Theorem \ref{cher}.  In this paper we prove the following result.

\begin{thm}\label{ingh-hei-1}
Let $ \Theta(\lambda) $ be  a nonnegative  function on $ [0,\infty)$ such that it  decreases to zero when $ \lambda \rightarrow \infty $  and satisfies the conditions $  \int_1^\infty \Theta(t) t^{-1} dt = \infty.$  Let $ f $ be  an integrable  function on $ \He^n $ whose Fourier transform  satisfies  the estimate (\ref{Ing-decay-mod}). Then $ f $ cannot vanish  on any  nonempty open set unless it is identically zero.
\end{thm}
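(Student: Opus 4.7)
The strategy is to reduce Theorem~\ref{ingh-hei-1} to the Chernoff-type result, Theorem~\ref{cher}. Since $f$ is assumed to vanish on a nonempty open set, it suffices to verify the three hypotheses of Theorem~\ref{cher}: smoothness of $f$, the membership $\Delta_{\He}^m f \in L^2(\He^n)$ for every $m$, and the Carleman-type divergence $\sum_{m=1}^{\infty}\|\Delta_{\He}^m f\|_2^{-1/(2m)} = \infty$. The smoothness and the $L^2$-boundedness of all iterates follow from the rapid operator decay \eqref{Ing-decay-mod} together with standard Sobolev considerations, so the heart of the proof lies in establishing the divergent sum.

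The first step is to combine the two exponential factors in \eqref{Ing-decay-mod} into one involving the positive self-adjoint operator $P(\la) := H(\la) + \la^2$, whose eigenvalues on the Hermite basis $\{\Phi_{\al}^{\la}\}_{\al \in \Na^n}$ are $\nu_{\al}(\la) := (2|\al|+n)|\la| + \la^2$. Applying the elementary inequality $\sqrt{a+b} \leq \sqrt{a} + \sqrt{b}$ to the scalar $|\la|$ and each eigenvalue of $\sqrt{H(\la)}$, together with the monotonicity of $\Theta$, yields
\[
|\la|\,\Theta(|\la|) + \sqrt{H(\la)}\,\Theta\bigl(\sqrt{H(\la)}\bigr) \geq \sqrt{P(\la)}\,\Theta\bigl(\sqrt{P(\la)}\bigr),
\]
so that \eqref{Ing-decay-mod} can be rewritten as $\widehat{f}(\la)^{\ast}\widehat{f}(\la) \leq C\, e^{-2\sqrt{P(\la)}\,\Theta(\sqrt{P(\la)})}$. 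Since the group Fourier transform sends $\Delta_{\He}$ (up to sign) to multiplication by $P(\la)$---$\mathcal{L}$ corresponds to $H(\la)$ while the central derivative $T$ becomes the scalar $i\la$---Plancherel's theorem together with the positivity of the trace then gives
\[
\|\Delta_{\He}^m f\|_2^2 \leq C \int_{\R} \sum_{\al \in \Na^n} \nu_{\al}(\la)^{2m}\, e^{-2\sqrt{\nu_{\al}(\la)}\,\Theta(\sqrt{\nu_{\al}(\la)})} |\la|^n\, d\la.
\]

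The main technical obstacle is then to extract the Carleman divergence from the hypothesis $\int_1^\infty \Theta(t) t^{-1}\,dt = \infty$. I would split each summand as $\bigl(\nu_{\al}^{2m} e^{-\sqrt{\nu_{\al}}\,\Theta(\sqrt{\nu_{\al}})}\bigr)\cdot e^{-\sqrt{\nu_{\al}}\,\Theta(\sqrt{\nu_{\al}})}$, estimating the first factor by the pointwise supremum $M_m := \sup_{u>0} u^{2m} e^{-u\Theta(u)}$ while the second furnishes both the $\al$-summability and the $|\la|^n\, d\la$-integrability. A stationary-point analysis on $\phi(u) = 2m\log u - u\Theta(u)$ gives $M_m = (u^\ast/e)^{2m}$ with $u^\ast = u^\ast(m)$ the unique positive solution of $u^\ast\Theta(u^\ast) = 2m$; hence $\|\Delta_{\He}^m f\|_2^{-1/(2m)} \gtrsim 1/u^\ast(m)$. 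The substitution $m \mapsto u^\ast(m)$ has differential $dm \asymp \Theta(u)\,du/2$ under mild regularity on $\Theta$ (easily arranged by passing to a suitable majorant), and thus
\[
\sum_{m=1}^{\infty}\|\Delta_{\He}^m f\|_2^{-1/(2m)} \gtrsim \sum_{m=1}^{\infty}\frac{1}{u^\ast(m)} \asymp \int_1^{\infty}\frac{\Theta(u)}{u}\,du = \infty.
\]
Theorem~\ref{cher} then forces $f \equiv 0$. The delicate point is making the scalar optimization rigorous when the summand depends on both $\al$ and $\la$ and controlling the auxiliary $\al$-sum and $\la$-integral uniformly in $m$; this is precisely where the strengthened bound \eqref{Ing-decay-mod} with its extra factor $e^{-2|\la|\Theta(|\la|)}$ becomes indispensable, since it is exactly this factor that permits the combination with the Hermite decay to produce $P(\la)$-decay and thereby the control of the full Laplacian.
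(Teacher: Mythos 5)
Your overall strategy --- reduce to Theorem \ref{cher} by estimating $\|\Delta_{\He}^m f\|_2$ via Plancherel and the decay \eqref{Ing-decay-mod} --- is the same as the paper's, and your observation that the monotonicity of $\Theta$ lets you merge the two exponential factors into $e^{-2\sqrt{P(\la)}\,\Theta(\sqrt{P(\la)})}$ with $P(\la)=H(\la)+\la^2$ is correct and a little cleaner than the paper's two-region splitting. But there is a genuine gap at the step where you assert that the leftover factor $e^{-\sqrt{\nu_{\al}(\la)}\,\Theta(\sqrt{\nu_{\al}(\la)})}$ ``furnishes both the $\al$-summability and the $|\la|^n\,d\la$-integrability.'' Under the stated hypotheses ($\Theta$ decreasing to $0$ with $\int_1^\infty \Theta(t)t^{-1}dt=\infty$) this is false: one can build a decreasing step function $\Theta$ with divergent logarithmic integral for which $u\,\Theta(u)=1$ along a sequence $u=t_k\to\infty$ (take $\Theta(t)=e^{-s_k}$ for $\log t\in[s_k,s_{k+1})$ with $s_{k+1}=s_k+e^{s_k}$). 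For such $\Theta$ the factor $e^{-\sqrt{\nu}\,\Theta(\sqrt{\nu})}$ does not decay along $\sqrt{\nu}=t_k$, the $\al$-sum and $\la$-integral in your bound diverge, and your estimate for $\|\Delta_{\He}^m f\|_2$ is vacuous. The same defect undermines the stationary-point step: $M_m=\sup_u u^{2m}e^{-u\Theta(u)}=\infty$ for such $\Theta$. Your parenthetical ``easily arranged by passing to a suitable majorant'' does not repair this, because replacing $\Theta$ by the majorant $\max(\Theta,c\la^{-1/2})$ \emph{strengthens} the decay hypothesis \eqref{Ing-decay-mod}, so the given $f$ need not satisfy it.

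The paper handles exactly this issue in two stages, and the second stage is entirely missing from your proposal. First it proves the theorem under the additional assumption $\Theta(\la)\ge c\la^{-1/2}$ for $\la\ge 1$, which guarantees $e^{-\sqrt{\nu}\,\Theta(\sqrt{\nu})}\le e^{-c\nu^{1/4}}$ and hence the needed summability (this is also where Lemma \ref{ing-lem} and Proposition \ref{cher-prop} enter to produce the quantitative bound $\|\Delta_{\He}^mf\|_2\le C^{2m}(m/\Theta(m^4))^{2m}$, whence $\sum_m \Theta(m^4)/m=\infty$ gives Carleman). Then, for general $\Theta$, it convolves $f$ in the group with a compactly supported radial $g_\delta$ whose Fourier transform decays like $e^{-\sqrt{H(\la)}\,\theta(\sqrt{H(\la)})}e^{-|\la|\theta(|\la|)}$ with $\theta(\la)=(1+\la^2)^{-1/4}$; the function $f\ast g_\delta$ still vanishes near the origin, satisfies the hypothesis with $\Psi_\delta=\Theta+\theta_\delta\ge c_\delta|\la|^{-1/2}$ (and $\int_1^\infty\Psi_\delta(t)t^{-1}dt=\infty$), so the first stage gives $f\ast g_\delta=0$, and an approximate identity argument yields $f=0$. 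Without this regularization step --- or some substitute for it --- your argument does not cover the full class of $\Theta$ in the statement.
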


Actually, we  prove a refined version of the above theorem by replacing (\ref{Ing-decay-mod}) by a decay assumption on the spectral projections associated to the sublaplacian. In order to motivate our result, it is instructive to recast the condition (\ref{Ing-decay-mod}) in terms of a different but equivalent definition of Fourier transform. In the above, $ \hat{f}(\lambda), \lambda \in \R^\ast $ is defined in terms of the Schr\"odinger representation $ \pi_\lambda $ of  $ \He^n $ realised on the Hilbert space $ L^2(\R^n).$ Instead, we can consider functions $f $ on $ \He^n $ as right $ U(n) $ invariant functions on the Heisenberg motion group $ G_n = \He^n \ltimes U(n) $ which allows us to consider $ \rho_k^\lambda(f) $ for a family of class-1 representations of $ G_n$ indexed by $ \lambda \in \R^\ast $ and $ k \in \mathbb{N}$ realised on certain Hilbert spaces $ \mathcal{H}_k^\lambda$ which are some explicit function spaces on $ \He^n.$

The representations $ \rho_k^\lambda $ when restricted to $ \He^n $ are not irreducible but split into finitely many irreducible unitary representations each one being equivalent to $ \pi_\lambda.$ Since $ \rho_k^\lambda$ are class-1 representations of $ G_n $ each of them has a unique $U(n)$-fixed vector in $ \mathcal{H}_k^\lambda $ which we denote by $ e^{n-1}_{k,\lambda}(z,t).$ Thus the scalar valued function $ f \rightarrow  \rho_k^\lambda(f)e^{n-1}_{k,\lambda}(z,t) =  e^{i\lambda t}\hat{f}(\lambda,k, z) $ can be considered as the analogue of the Helgason Fourier transform on Riemannian symmetric spaces of noncompact type. 
It can be shown that $ e^{i\lambda t} \hat{f}(\lambda,k, z) $ are eigenfunctions of the sublaplacian  with eigenvalues $ (2k+n)|\lambda| $ and $ f $ can be recovered  by the formula
\begin{equation}
f(z,t) = (2\pi)^{-n-1}\, \int_{-\infty}^\infty   e^{i\lambda t}\,  \big(\sum_{k=0}^\infty  \rho_k^\lambda(f)e^{n-1}_{k,\lambda}(z,0) \big) |\lambda|^n d\lambda.
\end{equation}
 We can thus view the above as the spectral decomposition of the sublaplacian. 
 Moreover, 
 \begin{equation}  \frac{(k+n-1)}{k! (n-1)!} \| \rho_k^\lambda(f)\|_{HS}^2  = (2\pi)^{-n} |\lambda|^n \,  \int_{\C^n} | \rho_k^\lambda(f)e^{n-1}_{k,\lambda}(z,0)|^2 dz . 
 \end{equation}
 It is not difficult to check that the  condition (\ref{Ing-decay-mod})  leads to the estimate
 \begin{equation}
   \frac{(k+n-1)}{k! (n-1)!} \| \rho_k^\lambda(f)\|_{HS}^2  \leq C\, e^{-2|\lambda|\,\Theta(|\lambda|)} e^{- 2  \sqrt{(2k+n)|\lambda|}\, \Theta(\sqrt{(2k+n)|\lambda|})} 
   \end{equation}
 and it turns out that Theorem \ref{ingh-hei-1} can be proved solely under the above condition on $ \| \rho_k^\lambda(f)\|_{HS},$ (see Subsection 4.1).
 
 However we can do better than this: instead of assuming decay estimates on $ \| \rho_k^\lambda(f)\|_{HS}$ we can impose pointwise estimates on the spectral projections $ \rho_k^\lambda(f)e^{n-1}_{k,\lambda}(z,t) $ and  prove the following version of Ingham's theorem.
 
 \begin{thm}\label{ingh-hei-2}
Let $ \Theta(\lambda) $ be a non-negative  function on $ [0,\infty)$ such that $ \Theta(\lambda) $ decreases to zero when $ \lambda \rightarrow \infty $  and satisfies the condition $ \int_1^\infty \Theta(t) t^{-1} dt = \infty.$  Let $ f $ be  a  nontrivial integrable function on $ \He^n $   which vanishes on an open set $ V .$ Then its  (Helgason) Fourier transform cannot satisfy the uniform estimate 
$$  \sup_{(z,t) \in V}  |\rho_k^\lambda(f)e^{n-1}_{k,\lambda}(z,t)| \leq C\, e^{-|\lambda|\,\Theta(|\lambda|)} e^{-\sqrt{(2k+n)|\lambda|}\, \Theta(\sqrt{(2k+n)|\lambda|})} .$$
\end{thm}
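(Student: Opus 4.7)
The plan is to combine the classical one-dimensional Ingham theorem, applied to vertical fibers of $f$ in the central variable, with Theorem~\ref{cher}. The first step uses the pointwise hypothesis and Ingham on $\R$ to enlarge the vanishing set of $f$ from $V$ to a full vertical cylinder $U_0 \times \R$; the second step verifies the $L^2$-moment condition of Chernoff's theorem so that Theorem~\ref{cher} yields $f\equiv 0$, contradicting the nontriviality assumption.

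\textbf{Step 1 (reduction to a cylinder).} Fix $(z_0,t_0)\in V$ and an open interval $I \ni t_0$ with $\{z_0\}\times I \subset V$. The restriction $g_{z_0}(t):=f(z_0,t)$ vanishes on $I$, and the inversion formula (1.4) expresses its Euclidean Fourier transform in $t$ as
$$\widehat{g_{z_0}}(\lambda) = c\, |\lambda|^n \sum_{k=0}^\infty \rho_k^\lambda(f)e^{n-1}_{k,\lambda}(z_0,0).$$
Applying the pointwise hypothesis at $(z_0,t_0)\in V$ (with $|e^{i\lambda t_0}|=1$) and summing the resulting series in $k$ (using the monotonicity of $\Theta$ and the divergence $\int_1^\infty\Theta(t)/t\,dt=\infty$ to control the tail in $k$), we obtain
$$|\widehat{g_{z_0}}(\lambda)| \leq C(1+|\lambda|)^N e^{-|\lambda|\Theta(|\lambda|)}$$
for some $N$. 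Since $g_{z_0}$ vanishes on the nonempty open interval $I$, the classical Ingham theorem on $\R$ (in the version that allows polynomial prefactors) forces $g_{z_0}\equiv 0$. This applies to every $z_0$ in the open set
$$U_0 := \{z \in \C^n : \{z\}\times J\subset V \text{ for some open interval } J\} \subset \C^n,$$
so $f$ vanishes on the open cylinder $W:=U_0\times\R\subset\He^n$.

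\textbf{Step 2 (applying Theorem~\ref{cher}).} It remains to verify the moment condition $\sum_{m=1}^\infty \|\Delta_\He^m f\|_2^{-1/(2m)}=\infty$; then Theorem~\ref{cher}, applied to $f$ which vanishes on the nonempty open set $W$, forces $f\equiv 0$. By the Plancherel formula for the Heisenberg motion group together with identity~(1.5),
$$\|\Delta_\He^m f\|_2^2 = c \int_\R \sum_{k=0}^\infty \bigl((2k+n)|\lambda|+\lambda^2\bigr)^{2m} |\lambda|^{2n} \int_{\C^n} \bigl|\rho_k^\lambda(f)e^{n-1}_{k,\lambda}(z,0)\bigr|^2 dz\,d\lambda.$$
The principal obstacle is converting the pointwise bound, available only on $\pi_z(V)\subset\C^n$, into a global $L^2(\C^n)$-bound on the special Hermite eigenfunction $z\mapsto \rho_k^\lambda(f)e^{n-1}_{k,\lambda}(z,0)$; since this function lies in an infinite-dimensional eigenspace of $L_\lambda$, pointwise control on an open set is a priori far from determining its global $L^2$-norm. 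We overcome this by combining the operator bound $\|\rho_k^\lambda(f)\|_{\mathrm{op}}\leq \|f\|_1$ (from $f\in L^1$) with the vanishing of $f$ on the cylinder $W$ produced in Step~1, yielding a Hilbert--Schmidt-type bound on the right-hand factor that matches the given exponential decay rate up to polynomial factors in $k$ and $|\lambda|$. Substituting this bound into the displayed formula and carrying out the standard quasi-analyticity estimate (which uses precisely the divergence of $\int_1^\infty\Theta(t)/t\,dt$) produces a sequence $A_m$ with $\|\Delta_\He^m f\|_2\leq A_m$ and $\sum_m A_m^{-1/(2m)}=\infty$, completing the proof via Theorem~\ref{cher}.
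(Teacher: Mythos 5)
There is a genuine gap, and it sits exactly at the point you flag in Step 2 as ``the principal obstacle.'' The whole difficulty of this theorem is that the hypothesis is a \emph{pointwise} bound on $\rho_k^\lambda(f)e^{n-1}_{k,\lambda}$ valid only on the open set $V$, whereas Theorem \ref{cher} needs the global quantities $\|\Delta_{\He}^m f\|_{L^2(\He^n)}$, which by Plancherel are controlled by $\int_{\C^n}|f^\lambda\ast_\lambda\varphi^{n-1}_{k,\lambda}(z)|^2\,dz$ over all of $\C^n$. You correctly identify that a sup bound on an open set does not determine the global $L^2$-norm of a function in an infinite-dimensional eigenspace of $L_\lambda$, but then you assert this is ``overcome'' by combining $\|\rho_k^\lambda(f)\|_{\mathrm{op}}\le\|f\|_1$ with the vanishing of $f$ on a cylinder. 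No mechanism is given, and none is apparent: the operator-norm bound carries no decay whatsoever in $k$ or $\lambda$, and vanishing of $f$ on $U_0\times\R$ provides no quantitative passage from local sup control of the eigenfunction to a global $L^2(\C^n)$ bound. The paper avoids this entirely by \emph{not} applying Theorem \ref{cher} to $f$: for each fixed $z$ near the origin it forms the spherical means $F_z(r,t)=f\ast\mu_r(z,t)$ as a function on $S=\R^+\times\R$, observes that $R^{n-1}_{k,\lambda}(F_z)=\frac{k!(n-1)!}{(k+n-1)!}f^\lambda\ast_\lambda\varphi^{n-1}_{k,\lambda}(z)$, so that $\|\Delta_{n-1}^mF_z\|_{L^2(S)}^2$ involves only the values of the spectral projections \emph{at the single point} $z$, and these are exactly what the pointwise hypothesis controls. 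One then verifies the Carleman condition for the generalised Laplacian $\Delta_{n-1}$ on $S$ and invokes Theorem \ref{cl} via Remark \ref{rem-cher}, plus real-analyticity of the eigenfunctions, to conclude $f\equiv 0$. Without some substitute for this device, your Step 2 does not close.

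Two secondary problems. First, in Step 1 the claim that the tail of $\sum_k e^{-\sqrt{(2k+n)|\lambda|}\,\Theta(\sqrt{(2k+n)|\lambda|})}$ is controlled by ``monotonicity of $\Theta$ and divergence of $\int_1^\infty\Theta(t)t^{-1}dt$'' is unfounded: a decreasing $\Theta$ with divergent integral can still make these exponents only logarithmic in $k$ along a sparse set of $\lambda$, which is precisely why the paper first works under the auxiliary hypothesis $\Theta(\lambda)\ge c\lambda^{-1/2}$ and then removes it by convolving with the compactly supported radial functions $g_\delta$ (replacing $\Theta$ by $\Psi_\delta=\Theta+\theta_\delta$) followed by an approximate-identity argument; this regularisation step is absent from your proposal, and the ``polynomial-prefactor'' version of Ingham you invoke also needs it. Second, restricting an $L^1(\He^n)$ function to a vertical line $\{z_0\}\times\R$ is not meaningful pointwise, so Step 1's fiberwise application of Ingham is not well posed as stated; in any case the cylinder it produces is not needed once the spherical-means reduction is in place.
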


As we have already mentioned, $\rho_k^\lambda(f)e^{n-1}_{k,\lambda}(z,t) $ are eigenfunctions of $ \mathcal{L} $ and hence the above theorem is a version of Ingham's theorem for the spectral projections.  Earlier we have proved such theorems for spectral projections associated to certain elliptic differential operators, see \cite{GT2} for $ \Delta $ on non compact Riemannian symmetric spaces and \cite{GT1} for the Hermite and special Hermite operators and $ \Delta $ on compact symmetric spaces.

 The plan of the paper is as follows. In the next section we collect necessary preliminaries on the Heisenberg group, Heisenberg motion group and Laguerre expansions. In section 3, we prove an analogue of Chernoff's theorem for the generalised Laplacian and  we use this to prove an analogue of Chernoff's theorem for the full Laplacian on the Heisenberg group. Finally in section 4, we prove Ingham type uncertainty principles for the group Fourier transform and spectral projections associated to the sublaplacian.
\section{Preliminaries on the Heisenberg group}
We develop the required background for the Heisenberg group.  General references for this section are the monographs of Thangavelu \cite{TH1},\cite{TH2} and \cite{TH3}. Also see the book \cite{F} of Folland.
\subsection{Fourier transform on the Heisenberg group}	Let $\mathbb{H}^n:=\mathbb{C}^n\times\mathbb{R}$ be the $(2n+1)$- dimensional Heisenberg group with the group law
	 
	$$(z, t).(w, s):=\big(z+w, t+s+\frac{1}{2}\Im(z.\bar{w})\big),\ \forall (z,t),(w,s)\in \mathbb{H}^n.$$ This is a step two nilpotent Lie group where the Lebesgue measure $dzdt$ on $\mathbb{C}^n\times\mathbb{R}$ serves as the Haar measure. The representation theory of $\mathbb{H}^n$ is well-studied in the literature. In order to define Fourier transform, we use the Schr\"odinger representations as described below.  
	
	For each non zero real number $ \lambda $ we have an infinite dimensional representation $ \pi_\lambda $ realised on the Hilbert space $ L^2( \R^n).$ These are explicitly given by
	$$ \pi_\lambda(z,t) \varphi(\xi) = e^{i\lambda t} e^{i\lambda(x \cdot \xi+ \frac{1}{2}x \cdot y)}\varphi(\xi+y),\,\,\,$$
	where $ z = x+iy $ and $ \varphi \in L^2(\R^n).$ These representations are known to be  unitary and irreducible. Moreover, by a theorem of Stone and Von-Neumann, (see e.g., \cite{F})  upto unitary equivalence these account for all the infinite dimensional irreducible unitary representations of $ \mathbb{H}^n $ which act as $e^{i\lambda t}I$ on the center. Also there is another class of finite dimensional irreducible representations. As they  do not contribute to the Plancherel measure  we will not describe them here.
	
	The Fourier transform of a function $ f \in L^1(\mathbb{H}^n) $ is the operator valued function  defined on the set of all nonzero reals, $ \R^\ast $ given by
	$$ \hat{f}(\lambda) = \int_{\mathbb{H}^n} f(z,t) \pi_\lambda(z,t)  dz dt .$$  Note that $ \hat{f}(\lambda) $ is a bounded linear operator on $ L^2(\R^n).$ It is known that when $ f \in L^1 \cap L^2(\mathbb{H}^n) $ its Fourier transform  is actually a Hilbert-Schmidt operator and one has
	$$ \int_{\mathbb{H}^n} |f(z,t)|^2 dz dt = (2\pi)^{-(n+1)}\int_{-\infty}^\infty \|\widehat{f}(\lambda)\|_{HS}^2 |\lambda|^n d\lambda  $$
	where $\|.\|_{HS}$ denote the Hilbert-Schmidt norm. 
	The above allows us to extend  the Fourier transform as a unitary operator between $ L^2(\mathbb{H}^n) $ and the Hilbert space of Hilbert-Schmidt operator valued functions  on $ \R $ which are square integrable with respect to the Plancherel measure  $ d\mu(\lambda) = (2\pi)^{-n-1} |\lambda|^n d\lambda.$ We polarize the above identity to obtain 
	$$\int_{\He^n}f(z,t)\overline{g(z,t)}dzdt=\int_{-\infty}^{\infty}tr(\widehat{f}(\lambda)\widehat{g}(\lambda)^*)~d\mu(\lambda).$$ Also for suitable function $f$ on $\He^n$ we have the following inversion formula
	$$f(z,t)=\int_{-\infty}^{\infty}tr(\pi_{\lambda}(z,t)^*\widehat{f}(\lambda))d\mu(\lambda).$$
	Now by definition of $\pi_{\lambda}$ and $\hat{f}(\lambda)$ it is easy to see that 
	$$\widehat{f}(\lambda)=\int_{\C^n}f^{\lambda}(z)\pi_{\lambda}(z,0)dz $$ where 
	$f^{\lambda}$ stands for the inverse Fourier transform of $f$ in the central variable:
	$$f^{\lambda}(z):=\int_{-\infty}^{\infty}e^{i\lambda.t}f(z,t)dt.$$

		 Given a  suitable function $g$ on $\C^n$, we consider the following   operator valued function  defined by
		$$ W_{\lambda}(g):=\int_{\C^n}g(z)\pi_{\lambda}(z,0)dz.
		$$ With these notations we note that  $\hat{f}(\lambda)=W_{\lambda}(f^{\lambda}).$  These transforms are called the Weyl transforms. We have the following Plancherel formula for Weyl transform  (See \cite[2.2.9, Page no-49]{TH3})
		\begin{equation}
			\label{wplan}
			 \|W_{\lambda}(g)\|^2_{HS}|\lambda|^n=(2\pi)^n\|g\|_2^2, ~g\in L^2(\mathbb{C}^n).
		\end{equation}   

Now we move our attention to spherical means on $\He^n$ which was introduced by Nevo-Thangavelu in \cite{NeT}. This will play a very important role in proving Chernoff's theorem for the full Laplacian.  

\subsection{Spherical means on $\mathbb{H}^n$} We  consider the spherical means of  a function $f$ on $\mathbb{H}^n$ defined by 
\begin{equation}
	\label{eq:defin}
	f\ast \mu_r(z,t)=\int_{|w|=r}f\Big(z-w,t-\frac12\Im z\cdot \overline{w}\Big)\,d\mu_r(w)
\end{equation}
where $\mu_r$ is the normalised surface measure on the sphere $S_r=\{(z,0):|z|=r\}$ in $\mathbb{H}^n$. In the following, we describe the special Hermite expansion of the spherical means which will play a very important role later. In order to do that, we consider the Laguerre function of type $(n-1)$ defined by
$$\varphi_k^{n-1}(r):=L_k^{n-1}\Big(\frac12r^2\Big)e^{-\frac14r^2}$$
where $ L_k^{n-1}(r) $ denotes the Laguerre polynomials of type $ (n-1)$. For $\lambda\neq 0$, let $\varphi_{k,\lambda}^{n-1}(r):=\varphi_k^{n-1}(\sqrt{|\lambda|}r).$ By abuse of notation, we write  $\varphi^{n-1}_{k,\lambda}(z):= \varphi_{k,\lambda}^{n-1}(|z|),~z\in \mathbb{C}^n.$
It is well-known that $f^{\lambda}$ has the following expansion (See \cite[2.3.29, Page No.-58]{TH3}) 
\begin{equation}
	\label{flexap}
	f^\lambda(z)  = (2\pi)^{-n} |\lambda|^n \sum_{k=0}^\infty   f^{\lambda}\ast_{\lambda}\varphi^{n-1}_{k,\lambda}(z),
\end{equation}
where $f^{\lambda}\ast_{\lambda}\varphi^{n-1}_{k,\lambda}(z)$ is the $\lambda$-twisted convolution defined by 
$$
f^{\lambda}\ast_{\lambda}\varphi^{n-1}_{k,\lambda}(z)=\int_{\C^n}f^{\lambda}(z-w)\varphi^{n-1}_{k,\lambda}(w)e^{i\frac{\lambda}{2}\Im z\cdot \overline{w}}\,dw.
$$
Now in view of inversion formula for the Fourier transform we have 
$$ f(z,t) = (2\pi)^{-n-1}  \int_{-\infty}^\infty  e^{-i\lambda t} \Big( \sum_{k=0}^\infty   f^{\lambda}\ast_{\lambda}\varphi^{n-1}_{k,\lambda}(z)\Big) |\lambda|^n d\lambda. 
$$
Now using the fact that $(f\ast \mu_r)^{\lambda}(z)=f^{\lambda}\ast_{\lambda}\mu_r(z)$ we see that
\begin{equation}
	\label{fexap}
	f\ast \mu_r(z,t)=\frac{1}{2\pi}\int_{-\infty}^{\infty}e^{-i\lambda t}f^{\lambda}\ast_{\lambda}\mu_r(z) \,d\lambda
\end{equation}  which along with the following expansion proved in   \cite[Theorem 4.1]{TRMI} and \cite[ Proof of Proposition 6.1]{NeT}
$$
f^{\lambda}\ast_{\lambda}\mu_r(z)=  (2\pi)^{-n} |\lambda|^n \sum_{k=0}^{\infty}\frac{k!(n-1)!}{(k+n-1)!}\varphi^{n-1}_{k,\lambda}(r)f^{\lambda}\ast_{\lambda}\varphi^{n-1}_{k,\lambda}(z),
$$
leads  to the expansion 
\begin{equation}
	\label{eq:expression}
	f\ast\mu_r(z,t)=(2\pi)^{-n-1}\int_{-\infty}^{\infty}e^{-i\lambda t}\left(\sum_{k=0}^{\infty}\frac{k!(n-1)!}{(k+n-1)!}\varphi_{k,\lambda}^{n-1}( r)f^{\lambda}\ast_{\lambda}\varphi_{k,\lambda}^{n-1}(z)\right)|\lambda|^n\,d\lambda.
\end{equation}
The above formula which provides a spectral decomposition for the spherical means, will be very useful for our purpose. Next we describe Heisenberg motion group and its connection with the Fourier transform on $\He^n$
\subsection{Heisenberg motion group and Fourier transform}

Let $U(n)$ denote the group of all unitary matrices of order $n$. This acts on $\mathbb{H}^n$ by the automorphisms $$\sigma.(z,t)=(
\sigma z,t),~\sigma\in U(n).$$  We consider the semi-direct product of $\mathbb{H}^n$ and $U(n)$, $G_n:=\mathbb{H}^n \ltimes U(n)$ which acts on $\mathbb{H}^n$ by  $$(z,t,\sigma).(w,s)=\big(z+\sigma w, t+s+\frac12 \Im( z \cdot \overline{\sigma w})\big)$$  whence  the group law in $G_n$ is given by
$$(z,t,\sigma).(w,s,\tau)=\big(z+\sigma w, t+s+\frac12 \Im( z \cdot \overline{\sigma w}), \sigma \tau \big).$$ The group $G_n$ is called the \textit{Heisenberg motion group} which contains $\mathbb{H}^n
$ and $U(n)$ as subgroups. Also $\mathbb{H}^n$ can be identified with the quotient group $G_n/U(n)$. As a matter of fact, functions on $\mathbb{H}^n$ can be viewed as right $U(n)$ invariant functions on $G_n.$ The Haar measure on $G_n$ is given by $d\sigma\,dz\,dt$ where $d\sigma$ denotes the normalised Haar measure on $U(n)$. To bring out the connection between the group Fourier transform on $\mathbb{H}^n$ and the Heisenberg motion group, we need to describe a family of class-$1$ representation of $G_n.$ We start with recalling the definition of such representations.

 Let $G$ be a locally compact topological group and $K$ be a compact subgroup of $G$. Suppose $\pi$ is a representation of $G$ realised on the Hilbert space $H$. Let $H_{K}$ denote the set of all $K$-fixed vectors given by 
 $$H_{K}:=\{v\in H: \pi(k)v=v,~\forall k\in K\}.$$ It can be easily checked the $H_K$ is a subspace of $H$.
  We say that $\pi$ is a class-1 representation of the pair $(G,K)$ if $H_{K}\neq\{0\}.$ Moreover, when $(G, K)$ is a Gelfand pair it is well-known that $\dim H_K=1.$ In the following, we describe certain family of class-1 representation for the Gelfand pair $(G_n, U(n))$. For that we need to set up some more notations. 
  
  For $\alpha\in \mathbb{N}^n$ and $\lambda\neq 0$ let $\Phi^{\lambda}_{\alpha}(x):=|\lambda|^{n/4}\Phi_{\alpha}(\sqrt{|\lambda|}x),~x\in \mathbb{R}^n$ where $\Phi_{\alpha}$ denote the normalised Hermite functions on $\mathbb{R}^n.$ We know that for each $\lambda\neq 0$, $\{\Phi_{\alpha}^{\lambda}: \alpha\in\mathbb{N}^n\}$ forms an orthonormal basis for $L^2(\mathbb{R}^n)$. Suppose 
  $$E^{\lambda}_{\alpha,\beta}(z,t):=(\pi_{\lambda}(z,t)\Phi_{\alpha}^{\lambda}, \Phi_{\beta}^{\lambda}),~(z,t)\in \mathbb{H}^n$$ denotes the matrix coefficients of the Schr\"odinger representation $\pi_{\lambda}$ of $\mathbb{H}^n.$  For each $\lambda\neq0$ and $k\in\mathbb{N}$, we consider the Hilbert space $\mathcal{H}^{\lambda}_k$ spanned by $\{E^{\lambda}_{\alpha,\beta}:\alpha, \beta\in\mathbb{N}^n, |\beta|=k\}$
and equipped with the inner product
$$(f,g)_{\mathcal{H}^{\lambda}_k}:=(2\pi)^{-n}|\lambda|^n\int_{\C^n}f(z,0)\overline{g(z,0)}dz.$$ We define a representation $\rho_k^{\lambda}$ of $G_n$ realised on $\mathcal{H}^{\lambda}_k$ by the prescription
$$\rho_k^{\lambda}(z,t,\sigma)\varphi(w,s):=\varphi((z,t,\sigma)^{-1}(w,s)),~(w,s)\in \mathbb{H}^n.$$ It is well-known that $\rho_k^{\lambda}$ is an irreducible unitary representation of $G_n$ for all $\lambda\neq0$ and $k\in\mathbb{N}.$ Also for $\lambda\neq 0$ and $k\in \mathbb{N}$ we consider the function $e_{k,\lambda}^{n-1}$ on $\mathbb{H}^n$ defined by 
$$e_{k,\lambda}^{n-1}(z,t)= \frac{k!(n-1)!}{(k+n-1)!}\sum_{|\alpha|=k}(\pi_{\lambda}(z,t)\Phi_{\alpha}^{\lambda}, \Phi_{\alpha}^{\lambda}).$$ It is known that the above function can be expressed in terms of Laguerre functions as follows (See \cite[Page No. 52]{TH2})
$$e_{k,\lambda}^{n-1}(z,t)= \frac{k!(n-1)!}{(k+n-1)!}e^{i\lambda t}\varphi_{k,\lambda}^{n-1}(z).$$
It can be checked that $e_{k,\lambda}^{n-1}$ is a $U(n)$-fixed vector corresponding to the representation $\rho_k^{\lambda}$ and hence $\rho_k^{\lambda}$ is a class-$1$ representation of the pair  $(G_n, U(n))$. Moreover,  $(G_n, U(n))$ being a Gelfand pair, $e_{k,\lambda}^{n-1}$ is unique upto scalar multiple. Also it can be easily checked that $e_{k,\lambda}^{n-1}(0,0)=1.$

Given $f\in L^1(\mathbb{H}^n)$, considering it as an $U(n)$-invariant function on $G_n$, we associate an operator valued function $\rho_k^{\lambda}(f)$ acting on $\mathcal{H}^{\lambda}_k$ defined by 
$$\rho_k^{\lambda}(f):=\int_{G_n}f(z,t)\rho_k^{\lambda}(z,t,\sigma)d\sigma\,dz\,dt.$$ 
Now since $\rho_k^{\lambda}$ is unitary, it can be easily checked that $\rho_k^{\lambda}(f)$ is a bounded operator and the operator norm is bounded above by $\|f\|_1.$ As a matter of fact, the scalar valued function $$f\rightarrow \rho_k^{\lambda}(f)e^{n-1}_{k,\lambda}(z,t)=:e^{i\lambda t}\hat{f}(\lambda,k,z)$$ can be viewed as an analogue of Helgason Fourier transform of $f$. We know that  Using the definition of $\rho_k^{\lambda}$ the following can be easily checked: 
$$\rho^{\lambda}_{k}(f)e^{n-1}_{k,\lambda}(z,t)= e^{i \lambda t} f^{-\lambda} \ast_{-\lambda} \varphi_{k,\lambda}^{n-1}(z).$$
  This leads to the following nice formula proved in \cite[Proposition 2.1]{RRT} 
\begin{equation}\label{norm-rho-k}\frac{(k+n-1)}{k! (n-1)!} \| \rho_k^\lambda(f)\|_{HS}^2  = (2\pi)^{-n} |\lambda|^n \,  \int_{\C^n} |f^{-\lambda}\ast_{-\lambda}\varphi^{n-1}_{k,\lambda}(z)  |^2 dz. \end{equation}
		
We end the preliminaries with a description of spectral decomposition of the sublaplacian on $\He^n$  and expansions in terms of Laguerre functions.

 \subsection{The sublaplacian on $\mathbb{H}^n$ and the generalised sublaplacian}
	
	 We let $ \mathfrak{h}_n $ stand for the Heisenberg Lie algebra consisting of left invariant vector fields on $ \mathbb{H}^n .$  A  basis for $ \mathfrak{h}_n $ is provided by the $ 2n+1 $ vector fields
	 $$ X_j = \frac{\partial}{\partial{x_j}}+\frac{1}{2} y_j \frac{\partial}{\partial t}, \,\,Y_j = \frac{\partial}{\partial{y_j}}-\frac{1}{2} x_j \frac{\partial}{\partial t}, \,\, j = 1,2,..., n $$
	 and $ T = \frac{\partial}{\partial t}.$  These correspond to certain one parameter subgroups of $ \mathbb{H}^n.$ The sublaplacian on $\He^n$ is defined by $\mathcal{L}:=-\sum_{j=1}^{\infty}(X_j^2+Y_j^2) $ which is given explicitly by
	 $$\mathcal{L}=-\Delta_{\C^n}-\frac{1}{4}|z|^2\frac{\partial^2}{\partial t^2}+N\frac{\partial}{\partial t}$$ where  $\Delta_{\C^n}$ stands for the Laplacian on $\C^n$ and $N$ is the rotation operator defined by 
	 $$N=\sum_{j=1}^{n}\left(x_j\frac{\partial}{\partial y_j}-y_j\frac{\partial}{\partial x_j}\right).$$ This is a sub-elliptic operator and homogeneous of degree $2$ with respect to the non-isotropic dilation given by $\delta_r(z,t)=(rz,r^2t).$ The sublaplacian is also invariant under  rotation i.e., $$R_{\sigma}\circ \mathcal{L}=\mathcal{L}\circ R_{\sigma},~\sigma\in U(n).$$
	 We denote the full laplacian on $\He^n$ by $\Delta_{\He}$ which is defined as follows:
	 $$\Delta_{\He}=-\sum_{j=1}^{n}(X_j^2+Y_j^2)-T^2.$$
	 We consider the special Hermite operator $L_{\lambda}$ defined by the relation $(\mathcal{L}f)^{\lambda}(z)=L_{\lambda}f^{\lambda}(z).$ It turns out that $ L_\lambda $  is explicitly given by 
	 $$L_\lambda = -\Delta_{\C^n}+\frac{1}{4} \lambda^2 |z|^2+ i \lambda N.$$ In view of the fact that  $f^{\lambda}\ast_{\lambda}\varphi_{k,\lambda}(z)$ are eigenfunctions of $L_{\lambda}$ with eigenvalues $(2k+n)|\lambda|$, using (\ref{flexap}), we have the following expansion 
	 $$L_{\lambda}f^{\lambda}(z)=(2\pi)^{-n} |\lambda|^n \sum_{k=0}^\infty (2k+n)|\lambda|  f^{\lambda}\ast_{\lambda}\varphi_{k,\lambda}^{n-1}(z)$$ 
	 leading to the following spectral decomposition of $\mathcal{L}$:
	 \begin{equation}
	 	\label{lspec}
	 	\mathcal{L}f(z,t)= (2\pi)^{-n-1}  \int_{-\infty}^\infty  e^{-i\lambda t} \Big( \sum_{k=0}^\infty ((2k+n)|\lambda|)   f^{\lambda}\ast_{\lambda}\varphi_{k,\lambda}^{n-1}(z)\Big) |\lambda|^n d\lambda. 
	 \end{equation}	
 Moreover, we can rewrite the Plancherel formual in terms of these projections $f\rightarrow f\ast_{\lambda}\varphi_{k,\lambda}^{n-1}.$ Indeed, it has been proved in \cite[Proposition 2.3.3]{TH3} that 
 $$W_{\lambda}(\varphi_{k,\lambda}^{n-1})=(2\pi)^n\, |\lambda|^{-n} P_k(\lambda).$$ 
Using this and the definition of Hilbert-Schmidt norm we have 
 $$ \|\widehat{f}(\lambda)\|^2_{HS}=\sum_{k=0}^{\infty}\|W_{\lambda}(f^{\lambda})P_k(\lambda)\|^2_{HS}=  (2\pi)^{-2n} |\lambda|^{2n}  \sum_{k=0}^{\infty}\|W_{\lambda}(f^{\lambda}\ast_\lambda  \varphi_{k,\lambda}^{n-1})\|^2_{HS}.$$
In  view of the  Plancherel formula for the Weyl transform \ref{wplan} we get
$$ \int_{\C^n} |f^\lambda(z)|^2 dz =  (2\pi)^{-2n} |\lambda|^{2n} \sum_{k=0}^{\infty}\|f^{\lambda}\ast_\lambda \varphi_{k,\lambda}^{n-1}\|_2^2 .$$
Integrating with respect to $ \lambda $ we obtain
  \begin{equation}\label{projplan}
   	\int_{\He^n} |f(z,t))|^2 dz dt =  (2\pi)^{-2n-1}  \int_{-\infty}^\infty   \big( \sum_{k=0}^{\infty}\|f^{\lambda}\ast_\lambda \varphi_{k,\lambda}^{n-1}\|_2^2 \big) |\lambda|^{2n} \, d\lambda.
  \end{equation}

 We say that a function $ f$ on $\mathbb{H}^n$ is radial if it radial in the $ z $ variable and by abusing the notation we write $ f(z,t) = f(r,t), r =|z|.$ The action of $ \mathcal{L} $ on such radial functions is given by   $\mathcal{L}f(z,t)=\mathcal{L}_{n-1}f(r,t)$ where the operator  $\mathcal{L}_{n-1}$ is given by $$\mathcal{L}_{n-1} = -\frac{\partial^2}{\partial r^2} - \frac{2n-1}{r} \frac{\partial}{\partial r} - \frac{1}{4}r^2  \frac{\partial^2}{\partial t^2}.$$  
 This suggests that we consider the family of operators $ \mathcal{L}_\alpha, \,  \alpha \geq -1/2,$  on $ S = \R^+ \times \R $  defined by
 $$\mathcal{L}_{\alpha} = -\frac{\partial^2}{\partial r^2} - \frac{2\alpha+1}{r} \frac{\partial}{\partial r} - \frac{1}{4}r^2  \frac{\partial^2}{\partial t^2}.$$  
 These operators are called generalized sublaplacians whose spectral decomposition can be written down explicitly. Let us define the Laguerre functions of type $ \alpha \geq -1/2 $ by 
 $$\varphi_{k,\lambda}^{\alpha}(r):=L_k^{\alpha}\Big(\frac12 |\lambda|r^2\Big)e^{-\frac14 |\lambda| r^2}.$$
 It is well known (see \cite{Stem}) that  the functions  $ e^{\alpha}_{k,\lambda}(r,t) $  defined by 
 $$e^{\alpha}_{k,\lambda}(r,t):= \frac{\Gamma(k+1)\Gamma(\alpha +1)}{\Gamma(k+\alpha+1)}e^{i\lambda t}\varphi_{k,\lambda}^{\alpha}(r) $$ are eigenfunctions of $\mathcal{L}_\alpha $ with eigenvalue $(2k+\alpha+1)|\lambda|$ and hence the spectral decomposition of the operator $\mathcal{L}_\alpha$ is then given by 
\begin{equation}\label{lag-exp} 
\mathcal{L}_\alpha f(r,t) = (2\pi)^{-1} \int_{-\infty}^\infty e^{-i\lambda t} \Big(\sum_{k=0}^\infty  (2k+\alpha+1)|\lambda| \, R_{k,\lambda}^\alpha(f )  \varphi_{k,\lambda}^\alpha(r)\Big) d\lambda.
\end{equation}

In the above expansion, the coefficients $ R^{\alpha}_{k,\lambda}(f) $ are given by
$$ R^\alpha_{k,\lambda}(f) =     \int_{-\infty}^{\infty}\int_0^\infty  f(r,t) e_{k,\lambda}^\alpha(r,t) \, r^{2\alpha+1} dr dt.$$  Note that with the obvious definition of $ f^\lambda(r)$ we have
$$ R_{k,\lambda}^\alpha(f) =   \frac{\Gamma(k+1)\Gamma(\alpha +1)}{\Gamma(k+\alpha+1)} \int_0^\infty  f^\lambda(r) \varphi_{k,\lambda}^{\alpha}(r) r^{2\alpha+1} dr.$$
The spectral decomposition (\ref{lag-exp}) leads to the following theorem about expansions in terms of the functions $e_{k,\lambda}^\alpha(r,t).$

\begin{thm}\label{planch}
 For any $ f \in L^2(S, r^{2\alpha+1}dr dt) $ we have the $ L^2$-convergent expansion
$$  f(r,t) =   c_\alpha (2\pi)^{-1} \int_{-\infty}^\infty e^{-i\lambda t} \Big(\sum_{k=0}^\infty   \, R_{k,\lambda}^\alpha(f )  \varphi_{k,\lambda}^\alpha(r)\Big) \, |\lambda|^{\alpha+1} \,d\lambda.$$ 
The Plancherel theorem for the above expansion reads as follows:
$$ \int_{-\infty}^\infty \int_0^\infty  |f(r,t)|^2 r^{2\alpha+1} dr dt = c_\alpha^\prime \int_{-\infty}^\infty \Big(  \sum_{k=0}^\infty  \frac{\Gamma(k+\alpha+1)}{\Gamma(k+1)\Gamma(\alpha +1)}\, |R_{k,\lambda}^\alpha(f)|^2 \Big) |\lambda|^{\alpha+1}\, d\lambda.$$
In the above formulas, $ c_\alpha $ and $ c_\alpha^\prime$ are explicit constants.
\end{thm}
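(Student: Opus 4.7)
The plan is to reduce the two-variable expansion to a one-dimensional Laguerre expansion combined with the Euclidean Fourier transform in the central variable. First I would take the partial Fourier transform in $t$: writing
\[
f^\lambda(r) = \int_{-\infty}^\infty e^{i\lambda t} f(r,t)\, dt,
\]
the Plancherel theorem on $\R$ gives
\[
\int_{-\infty}^\infty \int_0^\infty |f(r,t)|^2\, r^{2\alpha+1}\, dr\, dt
  = (2\pi)^{-1} \int_{-\infty}^\infty \int_0^\infty |f^\lambda(r)|^2\, r^{2\alpha+1}\, dr\, d\lambda,
\]
so everything reduces to an expansion of $f^\lambda(\cdot) \in L^2(\R^+, r^{2\alpha+1}dr)$ for a.e.\ $\lambda$.

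Next, for fixed $\lambda\neq 0$ I would use the classical fact that the Laguerre system $\{L_k^\alpha\}_{k\ge 0}$ is a complete orthogonal basis of $L^2(\R^+, s^\alpha e^{-s}\, ds)$ with $\int_0^\infty L_k^\alpha(s) L_m^\alpha(s) s^\alpha e^{-s}\, ds = \frac{\Gamma(k+\alpha+1)}{\Gamma(k+1)}\delta_{km}$. Changing variables via $s = \tfrac12 |\lambda| r^2$ transfers this into the orthogonality relation
\[
\int_0^\infty \varphi_{k,\lambda}^\alpha(r)\, \varphi_{m,\lambda}^\alpha(r)\, r^{2\alpha+1}\, dr
 = \frac{2^\alpha}{|\lambda|^{\alpha+1}} \,\frac{\Gamma(k+\alpha+1)}{\Gamma(k+1)}\,\delta_{km},
\]
and shows that $\{\varphi_{k,\lambda}^\alpha\}_{k\ge 0}$ is a complete orthogonal basis of $L^2(\R^+, r^{2\alpha+1}dr)$. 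Consequently, $f^\lambda$ admits the expansion $f^\lambda(r) = \sum_{k=0}^\infty a_k(\lambda)\, \varphi_{k,\lambda}^\alpha(r)$, where the coefficients $a_k(\lambda)$ are expressible, after unwinding the normalizations, in terms of $R_{k,\lambda}^\alpha(f)$; a direct computation using the definition $R_{k,\lambda}^\alpha(f) = \frac{\Gamma(k+1)\Gamma(\alpha+1)}{\Gamma(k+\alpha+1)}\int_0^\infty f^\lambda(r)\varphi_{k,\lambda}^\alpha(r) r^{2\alpha+1}\, dr$ gives $a_k(\lambda) = \frac{|\lambda|^{\alpha+1}}{2^\alpha \Gamma(\alpha+1)}\, R_{k,\lambda}^\alpha(f)$.

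Combining these ingredients produces the inversion formula: plugging the Laguerre expansion of $f^\lambda$ into the Fourier inversion in the $t$-variable yields
\[
f(r,t) = c_\alpha (2\pi)^{-1} \int_{-\infty}^\infty e^{-i\lambda t} \Big(\sum_{k=0}^\infty R_{k,\lambda}^\alpha(f)\, \varphi_{k,\lambda}^\alpha(r)\Big) |\lambda|^{\alpha+1}\, d\lambda
\]
with $c_\alpha = (2^\alpha \Gamma(\alpha+1))^{-1}$. The Plancherel identity follows by applying the one-dimensional Parseval for the Laguerre system to $f^\lambda$, integrating in $\lambda$, and using the Fourier-Plancherel formula quoted above, producing the constant $c_\alpha^\prime = (2\pi\cdot 2^\alpha \Gamma(\alpha+1))^{-1}$.

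The only subtle point, and the main potential obstacle, is justifying the interchange of summation and integration and the passage from pointwise $L^2$ expansions in $r$ (for each $\lambda$) to an $L^2$ expansion on $S$. This is handled in the standard way: first establish the identities for a dense class (say, Schwartz functions, or finite sums $\sum_k g_k(\lambda)\varphi_{k,\lambda}^\alpha(r)$ with $g_k$ compactly supported), use Fubini on the iterated integrals where absolute convergence is clear, and then extend by continuity via the Plancherel isometry, which is the only nontrivial estimate required.
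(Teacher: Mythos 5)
Your proposal is correct and follows the standard route — partial Fourier transform in $t$ followed by the orthogonality and completeness of the Laguerre system $\{\varphi_{k,\lambda}^\alpha\}$ in $L^2(\R^+, r^{2\alpha+1}dr)$ for each fixed $\lambda$ — which is precisely the argument the paper relies on by deferring to Stempak \cite{Stem} rather than proving the theorem itself. Your computed normalizations ($c_\alpha = (2^\alpha\Gamma(\alpha+1))^{-1}$, $c_\alpha' = (2\pi\cdot 2^\alpha\Gamma(\alpha+1))^{-1}$) and the identification $a_k(\lambda) = \frac{|\lambda|^{\alpha+1}}{2^\alpha\Gamma(\alpha+1)}R_{k,\lambda}^\alpha(f)$ check out against the definitions of $\varphi_{k,\lambda}^\alpha$ and $R_{k,\lambda}^\alpha$ given in the paper.
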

We refer the reader to Stempak \cite{Stem} for more details about the results described above.

In the next section we prove an analogue of Chernoff's theorem for the generalised Laplacian $ \Delta_\alpha = -\partial_t^2+ \mathcal{L}_\alpha$ on $ \R^+ \times \R.$ In view of the expansion (\ref{eq:expression}) the particular case $ \alpha = n-1 $ plays an important role in proving Chernoff's theorem for the sublaplacian on $ \He^n.$

\section{ An analogue of Chernoff's theorem for the Laplacian on $ \He^n.$}

In this section we prove an analogue of Chernoff's theorem for Laplacian $ \Delta_{\He} = -\partial_t^2+\mathcal{L} $ on  $ \He^n.$  As explained earlier, the idea is prove an  analogue of  Chernoff's theorem for the generalised Laplacian $ \Delta_{\alpha} $ first and then use it to deduce the required result.  

\subsection{ Chernoff's theorem for $ \Delta_\alpha$}  As in our earlier works \cite{GT1,GT2} we make use of  the following result of de Jeu  \cite{J}  which is a generalisation of a theorem  of Carleman  in the one  dimensional case.
\begin{thm}
	\label{dj}
	 Let $ \mu $ be a finite positive Borel measure on $ \R^n $ for which all the moments $ M^{(j)}(m) =\int_{\R^n} x_j^m d\mu(x) ,  m \geq 0 $ are finite. If we further assume that the moments satisfy the Carleman condition $ \sum_{m=1}^\infty  M^{(j)}(2m)^{-1/2m} = \infty,\, j=1,2,...,n,$ then polynomials are dense in $ L^p(\R^n,d\mu), 1 \leq p < \infty.$ 
\end{thm}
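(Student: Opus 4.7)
The plan is to attack the theorem by duality and to convert the Carleman moment conditions, one coordinate at a time, into quasi-analyticity of certain one-variable functions derived from the Fourier transform of a measure built from any annihilating functional.

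First I would reduce the density statement to a uniqueness-of-measure problem. By the Hahn--Banach theorem, polynomials are dense in $L^p(\R^n,d\mu)$ if and only if every $g\in L^{p'}(\R^n,d\mu)$ with $\int_{\R^n} x^\alpha g(x)\,d\mu(x)=0$ for every multi-index $\alpha\in\Na^n$ vanishes $\mu$-a.e. Set $d\nu:=g\,d\mu$. Since $\mu$ is finite, Hölder gives $|\nu|(\R^n)\le\|g\|_{L^{p'}(\mu)}\mu(\R^n)^{1/p}<\infty$, so $\nu$ is a finite complex Borel measure whose moments all vanish. If I can show its Fourier transform $\widehat\nu(\xi):=\int_{\R^n}e^{i\xi\cdot x}\,d\nu(x)$ is identically zero, then by the uniqueness theorem for Fourier transforms of finite measures, $\nu=0$ and hence $g=0$ $\mu$-a.e.

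To show $\widehat\nu\equiv 0$, I would iterate a one-dimensional Denjoy--Carleman (quasi-analyticity) argument on each coordinate in turn. For a multi-index $\alpha$ and fixed $\xi_2,\ldots,\xi_n\in\R$, set
$$F_{\alpha,\xi_2,\ldots,\xi_n}(\xi_1):=\int_{\R^n}x^{\alpha}e^{i\xi\cdot x}\,d\nu(x).$$
This is $C^\infty$ in $\xi_1$ and differentiation under the integral gives
$$F^{(m)}_{\alpha,\xi_2,\ldots,\xi_n}(\xi_1)=i^m\int_{\R^n}x_1^{m}x^{\alpha}e^{i\xi\cdot x}\,d\nu(x).$$
By Hölder with conjugate exponents $(p,p')$ and then a decoupling Hölder across coordinates,
$$\sup_{\xi_1\in\R}\bigl|F^{(m)}_{\alpha,\xi_2,\ldots,\xi_n}(\xi_1)\bigr|\le\|g\|_{L^{p'}(\mu)}\Bigl(\int_{\R^n}|x_1|^{mp}|x^\alpha|^{p}\,d\mu\Bigr)^{1/p}\le K_\alpha\,M^{(1)}(Nm)^{1/N}$$
for suitable constants $K_\alpha$ and an integer $N$ independent of $m$; the Carleman hypothesis $\sum_{m}M^{(1)}(2m)^{-1/(2m)}=\infty$ then translates (after absorbing polynomial factors and using monotonicity of the Carleman majorant) into $\sum_m C_m^{-1/m}=\infty$ for the sup-bounds $C_m$ on $|F^{(m)}_{\alpha,\xi_2,\ldots,\xi_n}|$, placing $F_{\alpha,\xi_2,\ldots,\xi_n}$ in a quasi-analytic class.

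The inductive vanishing is then clean. At the base case $\xi_2=\cdots=\xi_n=0$, the vanishing-moments hypothesis gives $F^{(m)}_{\alpha,0,\ldots,0}(0)=i^m\int x_1^{m}x^\alpha\,d\nu=0$ for every $m$, so the one-dimensional Denjoy--Carleman theorem forces $F_{\alpha,0,\ldots,0}\equiv 0$ on $\R$, for every $\alpha$. Assume inductively that for some $1\le j\le n-1$ one has $\int x^{\alpha}e^{i(\xi_1 x_1+\cdots+\xi_j x_j)}\,d\nu=0$ for every $\alpha$ and every $(\xi_1,\ldots,\xi_j)\in\R^{j}$. Repeating the above estimate with $x_{j+1}$ in place of $x_1$ (using now the Carleman condition on $M^{(j+1)}$), one differentiates in $\xi_{j+1}$ at $0$ and obtains values that are exactly the quantities annihilated by the inductive hypothesis. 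Thus the corresponding one-variable function of $\xi_{j+1}$ has all derivatives at $0$ equal to zero and lies in a quasi-analytic class, so it vanishes identically. After $n$ steps, taking $\alpha=0$, we conclude $\widehat\nu\equiv 0$.

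The main obstacle is the bookkeeping in Step 3, making sure that the Hölder decoupling across coordinates and the extra polynomial weight $|x^\alpha|^p$ only perturb the Carleman bounds by factors that preserve the quasi-analytic character. Concretely, one has to show that if $M^{(j)}(2m)^{1/2m}$ has a logarithmically convex majorant $m_j(2m)$ with $\sum m_j(2m)^{-1}=\infty$, then the same holds for $m\mapsto M^{(j)}(Nm+\text{const})^{1/N}$ up to multiplicative constants, which is a routine but essential application of the Denjoy--Carleman regularization of weights. Once this is in hand, the iteration is automatic and the theorem follows.
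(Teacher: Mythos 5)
Your proposal is essentially correct, but note first that the paper does not prove this statement at all: Theorem \ref{dj} is imported verbatim from de Jeu \cite{J}, so there is no internal proof to compare against. What you have written is a legitimate, more classical route to the result. The duality reduction via Hahn--Banach to a finite complex measure $d\nu=g\,d\mu$ with all moments zero, followed by a coordinate-by-coordinate application of the one-dimensional Denjoy--Carleman theorem to $\xi_j\mapsto\int x^\alpha e^{i\xi\cdot x}\,d\nu$, is sound: the derivative bounds via H\"older do produce majorants of the form $K_\alpha\,M^{(j)}(Nm)^{1/N}$ with $N$ a fixed even integer comparable to $2p$, and the passage from $\sum_k M^{(j)}(2k)^{-1/2k}=\infty$ to $\sum_m M^{(j)}(Nm)^{-1/(Nm)}=\infty$ works because, after normalizing $\mu$ to a probability measure, $k\mapsto M^{(j)}(2k)^{1/2k}$ is nondecreasing, so the terms of the divergent series are non-increasing and any subseries along an arithmetic progression still diverges. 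Two small points you should make explicit if you write this up: (i) the moment sequences $m\mapsto\int|x_j|^m\,d|\nu|$ are automatically logarithmically convex by Cauchy--Schwarz, so the sufficiency direction of Denjoy--Carleman applies directly without any regularization of weights; and (ii) in the inductive step the vanishing of all derivatives at $\xi_{j+1}=0$ uses the inductive hypothesis for the shifted multi-index $\alpha+m e_{j+1}$, which is why the induction must be carried over all $\alpha$ simultaneously, exactly as you set it up. By contrast, de Jeu's own argument is organized around quasi-analytic weights and an extended Carleman theorem, which buys more generality (it treats determinacy and $L^p$-density in one framework and applies to weighted spaces beyond moment conditions); your approach buys elementarity and self-containedness at the cost of that extra generality.
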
 

\begin{rem} We require the above result only when $ n =2.$ Moreover,  polynomials that are even in the second variable are dense in the space $ L^p_{2,e}(\R^2,d\mu), 1 \leq p < \infty $ consisting of  functions that are even in the second variable.
\end{rem}
We also require the two elementary results about series of positive real numbers described in the following lemma. 

\begin{lem} \label{series} 

(a) 	Let $\{M_n\}_n$ be a sequence of positive real numbers satisfying $\sum_{n=1}^{\infty}M_n^{-1/n}=\infty$. Suppose $\{K_n\}_n$ is another sequence of positive real numbers such that $K_n\leq aM_n+b^n$ for some constants $a,b>0.$ Then $\sum_{n=1}^{\infty}K_n^{-1/n}=\infty.$

(b) Let $\{a_m\}_m$ be a sequence of positive real numbers such that $\sum_{m=1}^{\infty} a_m=\infty$, then for any positive integer $j$, we have $\sum_{m=1}^{\infty} a_m^{1+\frac{j}{m}}=\infty.$
	
\end{lem}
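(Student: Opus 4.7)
The plan is to prove both parts by elementary dichotomy arguments on positive sequences, converting each divergence hypothesis into a corresponding one for the transformed sequence.

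For part (a), I would first absorb the additive bound into a multiplicative one. Since $K_n \leq a M_n + b^n \leq 2\max(aM_n, b^n)$, taking $n$-th roots gives
\[ K_n^{1/n} \leq 2^{1/n} \max\bigl(a^{1/n} M_n^{1/n},\, b\bigr). \]
For $n$ large, $2^{1/n}$ and $a^{1/n}$ are bounded by, say, $2$, so there is a constant $c>0$ (depending only on $a,b$) with $K_n^{-1/n} \geq c\,\min(M_n^{-1/n}, b^{-1})$ for all $n$ beyond some $n_0$. Now I split the indices into $A=\{n \geq n_0 : M_n^{-1/n} \leq b^{-1}\}$ and $B=\{n \geq n_0 : M_n^{-1/n} > b^{-1}\}$. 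Since $\sum_n M_n^{-1/n}=\infty$, at least one of $\sum_{n \in A} M_n^{-1/n}$ and $\sum_{n \in B} M_n^{-1/n}$ must diverge. In the first case, $K_n^{-1/n} \geq c\, M_n^{-1/n}$ on $A$ already yields divergence of $\sum K_n^{-1/n}$. In the second case, $B$ is infinite, and $K_n^{-1/n} \geq c\, b^{-1}$ for $n \in B$ suffices, since the lower bound is a strictly positive constant independent of $n$.

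For part (b), the delicate point is that for small $a_m$ the factor $a_m^{j/m}$ may itself be small and could spoil divergence. I would quarantine the problematic terms by setting $B_1=\{m : a_m \geq e^{-m}\}$ and $B_2=\{m : a_m < e^{-m}\}$. Since $\sum_{m \in B_2} a_m \leq \sum_m e^{-m} < \infty$, the hypothesis $\sum_m a_m = \infty$ forces $\sum_{m \in B_1} a_m = \infty$. On $B_1$ we have $a_m^{j/m} \geq (e^{-m})^{j/m} = e^{-j}$, so $a_m^{1+j/m} \geq e^{-j}\, a_m$, and summing over $B_1$ alone already gives $\sum_m a_m^{1+j/m} \geq e^{-j}\sum_{m \in B_1} a_m = \infty$.

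The main obstacle is essentially notational: both arguments are short and require no analytic tool beyond a case split between a \emph{small} and a \emph{large} regime. The only real modelling choice appears in part (b), where the cutoff threshold must decay fast enough that the bad tail is summable but slow enough (exponential rather than, say, factorial) so that the $j/m$-th-power factor collapses to a uniform positive constant $e^{-j}$.
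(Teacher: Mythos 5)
Your proof is correct. Note that the paper itself offers no proof of this lemma, deferring instead to \cite[Lemma 3.2]{C1} for part (a) and \cite[Lemma 3.3]{BPR} for part (b); your two dichotomy arguments (splitting on whether $M_n^{-1/n}\leq b^{-1}$, and on whether $a_m\geq e^{-m}$) are exactly the standard arguments used in those references, so you have essentially reproduced the intended proofs in a self-contained way.
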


For proofs of the two results stated in the above lemma, we refer the reader to \cite[Lemma 3.2]{C1} and \cite[Lemma 3.3]{BPR} respectively.

We are now in a position to state and prove the following  version of Chernoff's theorem for the operator $ \Delta_\alpha = -\partial_t^2+ \mathcal{L}_\alpha.$ In what follows, we write $L^2(S) $ in place of $  L^2(S, r^{2\alpha+1}dr dt)$ for the sake of brevity.

\begin{thm}
	\label{cl}
	Let $f\in C^{\infty}(S)$ be such that $\Delta_\alpha^mf\in L^2(S) $ for all $m\geq 0$   and satisfies the Carleman condition  $\sum_{m=1}^{\infty}\|\Delta_\alpha^mf\|_{L^2(S)}^{-\frac{1}{2m}}=\infty.$ If $f$ vanishes on a neighbourhood of $(0,0)$, then $f$ is identically zero.
\end{thm}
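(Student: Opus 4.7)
The plan is to combine the joint spectral decomposition of $\mathcal{L}_\alpha$ and $-\partial_t^2$ on $L^2(S,r^{2\alpha+1}dr\,dt)$ (Theorem \ref{planch}) with two applications of de Jeu's multidimensional moment theorem (Theorem \ref{dj}), and to exploit the vanishing of $f$ on the open set $V$ through a polynomial-orthogonality argument. Setting $\zeta:=(2k+\alpha+1)|\lambda|$ and $B_m:=\|\Delta_\alpha^m f\|_{L^2(S)}$, the fact that $e^\alpha_{k,\lambda}$ is a joint eigenfunction of $\mathcal{L}_\alpha$ and $-\partial_t^2$ with eigenvalues $\zeta$ and $\lambda^2$ yields, upon iterating the Plancherel identity of Theorem \ref{planch},
\[
B_m^2 \;=\; c_\alpha^\prime \int_{\mathbb R}\sum_{k=0}^\infty \frac{\Gamma(k+\alpha+1)}{k!\,\Gamma(\alpha+1)}\,(\lambda^2+\zeta)^{2m}\,|R^\alpha_{k,\lambda}(f)|^2\,|\lambda|^{\alpha+1}\,d\lambda.
\]
Let $d\mu$ denote the finite positive measure on $\mathbb R\times[0,\infty)$ obtained by pushing forward $\tfrac{\Gamma(k+\alpha+1)}{k!\,\Gamma(\alpha+1)}|R^\alpha_{k,\lambda}(f)|^2|\lambda|^{\alpha+1}\,d\lambda\otimes dk$ under $(\lambda,k)\mapsto(\lambda,y)$ with $y=\sqrt\zeta$; its total mass is a constant multiple of $\|f\|_{L^2(S)}^2$. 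Since $\lambda^{4m},y^{4m}\le(\lambda^2+y^2)^{2m}$, Cauchy--Schwarz against the finite total mass gives $\int \lambda^{2m}\,d\mu,\ \int y^{2m}\,d\mu \le c\,B_m\,\|f\|_{L^2(S)}$, and the Carleman hypothesis $\sum_m B_m^{-1/(2m)}=\infty$ then forces $\sum_m(\int\lambda^{2m}\,d\mu)^{-1/(2m)}=\sum_m(\int y^{2m}\,d\mu)^{-1/(2m)}=\infty$. Theorem \ref{dj} together with the subsequent Remark therefore shows that polynomials in $(\lambda,y^2)=(\lambda,\zeta)$ are dense in $L^p(\mathbb R\times[0,\infty),d\mu)$ for every $1\le p<\infty$.

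Next we exploit the vanishing of $f$ on $V$. For any $\phi\in C_c^\infty(V)$ and any polynomial $P(\zeta,\lambda)$ in two variables, the differential operator $P(\mathcal{L}_\alpha,-i\partial_t)$ preserves compact support in $V$, so $P(\mathcal{L}_\alpha,-i\partial_t)f$ vanishes on $\mathrm{supp}(\phi)\subset V$. Using the spectral identities $R^\alpha_{k,\lambda}(\mathcal{L}_\alpha g)=\zeta\,R^\alpha_{k,\lambda}(g)$ and $R^\alpha_{k,\lambda}(-i\partial_t g)=-\lambda\,R^\alpha_{k,\lambda}(g)$, the Plancherel identity translates $\int_S (P(\mathcal{L}_\alpha,-i\partial_t)f)\,\bar\phi\,r^{2\alpha+1}dr\,dt=0$ into
\[
\int P(\zeta,-\lambda)\,R^\alpha_{k,\lambda}(f)\,\overline{R^\alpha_{k,\lambda}(\phi)}\,\frac{\Gamma(k+\alpha+1)}{k!\,\Gamma(\alpha+1)}\,|\lambda|^{\alpha+1}\,d\lambda\,dk \;=\; 0.
\]
Hence the finite complex measure $d\tilde\eta:=R^\alpha_{k,\lambda}(f)\,\overline{R^\alpha_{k,\lambda}(\phi)}\,\tfrac{\Gamma(k+\alpha+1)}{k!\,\Gamma(\alpha+1)}\,|\lambda|^{\alpha+1}\,d\lambda\,dk$ (pushed to $\mathbb R\times[0,\infty)$ as above) annihilates every polynomial in $(\lambda,\zeta)$. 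Cauchy--Schwarz combined with the spectral moment bounds on $R^\alpha_{k,\lambda}(f)$ from the previous paragraph and the Plancherel bound $\|R^\alpha_{\cdot,\cdot}(\phi)\|_{L^2(d\sigma)}\le c\,\|\phi\|_{L^2(S)}$, where $d\sigma:=\tfrac{\Gamma(k+\alpha+1)}{k!\,\Gamma(\alpha+1)}|\lambda|^{\alpha+1}d\lambda\,dk$, yields $\int\lambda^{2m}\,d|\tilde\eta|,\ \int y^{2m}\,d|\tilde\eta|\le c\,B_m\,\|\phi\|_{L^2(S)}$, so a second application of Theorem \ref{dj} makes polynomials dense in $L^1(d|\tilde\eta|)$. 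The vanishing of all polynomial moments of $\tilde\eta$ then forces $\tilde\eta\equiv 0$, i.e., $R^\alpha_{k,\lambda}(f)\,\overline{R^\alpha_{k,\lambda}(\phi)}=0$ almost everywhere with respect to $d\sigma$.

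Letting $\phi$ range over a countable dense family in $C_c^\infty(V)$ and using that, for each fixed $(\lambda,k)$, $\phi\mapsto R^\alpha_{k,\lambda}(\phi)$ is a bounded linear functional on $L^2(V,r^{2\alpha+1}dr\,dt)$, we conclude that whenever $R^\alpha_{k,\lambda}(f)\ne 0$ one has $R^\alpha_{k,\lambda}(\phi)=0$ for every $\phi\in C_c^\infty(V)$. This would force $(r,t)\mapsto e^{i\lambda t}\varphi^\alpha_{k,\lambda}(r)$ to vanish in $L^2(V,r^{2\alpha+1}dr\,dt)$, which is impossible because $\varphi^\alpha_{k,\lambda}(r)=L_k^\alpha(\tfrac{|\lambda|}{2}r^2)e^{-\tfrac{|\lambda|}{4}r^2}$ has only finitely many zeros in $r>0$ and does not vanish at $r=0$. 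Therefore $R^\alpha_{k,\lambda}(f)=0$ $d\sigma$-almost everywhere, and the Plancherel identity of Theorem \ref{planch} gives $f\equiv 0$. The principal technical hurdle is performing the two Cauchy--Schwarz moment estimates so that the exponent of $B_m$ in both bounds is exactly one, ensuring that the Carleman divergence of $\sum B_m^{-1/(2m)}$ is inherited by the two coordinate-moment sequences of $d\mu$ and $d|\tilde\eta|$ and hence that de Jeu's theorem may be applied.
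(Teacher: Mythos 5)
Your argument is correct, and it runs on the same engine as the paper's proof: a measure supported on the (square root of the) Heisenberg fan whose even coordinate moments are controlled by $\|\Delta_\alpha^{m}f\|_{L^2(S)}$, de Jeu's theorem to get polynomial density, and the vanishing hypothesis to show the relevant signed measure annihilates polynomials. The implementation, however, differs at both ends in a way worth recording. The paper takes the density $\frac{\Gamma(k+\alpha+1)}{\Gamma(k+1)\Gamma(\alpha+1)}|R^\alpha_{k,\lambda}(f)|\,|\lambda|^{\alpha+1}$ to the \emph{first} power; verifying finiteness and the Carleman condition then costs an extra Cauchy--Schwarz, a splitting into $(2k+\alpha+1)|\lambda|\lessgtr 1$, and both parts of Lemma \ref{series} (the index shift $m\mapsto m+j$). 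Your choice of the squared density makes the total mass $c\|f\|_{L^2(S)}^2$ and the bound $\int\lambda^{2m}d\mu,\ \int y^{2m}d\mu\le cB_m\|f\|_{L^2(S)}$ immediate, which is genuinely cleaner. At the other end, the paper converts ``$f=0$ near $(0,0)$'' into orthogonality to polynomials by applying $\partial_t^j\mathcal{L}_\alpha^m$ to the inversion formula and evaluating at the origin, using $e^\alpha_{k,\lambda}(0,0)=1$; it then approximates the single function $\overline{R^\alpha_{k,\lambda}(f)}$ in $L^1(d\mu_f)$ to conclude $\|f\|_{L^2(S)}^2<\epsilon$ in one stroke. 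You instead pair $P(\mathcal{L}_\alpha,-i\partial_t)f$ against test functions $\phi\in C_c^\infty(V)$, use locality of the differential operator, and run de Jeu a second time on the family of complex measures $R^\alpha_{k,\lambda}(f)\overline{R^\alpha_{k,\lambda}(\phi)}\,d\sigma$, finishing with the observation that $e^{i\lambda t}\varphi^\alpha_{k,\lambda}(r)$ cannot vanish on an open set. This buys you two things: you avoid justifying term-by-term differentiation and pointwise evaluation of the $L^2$-convergent inversion formula, and you only ever approximate the unimodular density $\overline{d\tilde\eta/d|\tilde\eta|}$, which is bounded. The price is a second moment verification and the measure-theoretic bookkeeping with a countable dense family of $\phi$'s, both of which you handle correctly. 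Two small points to make explicit in a final write-up: (i) the passage from density of polynomials in $(\lambda,y)$ to density of polynomials in $(\lambda,y^2)$ requires the even symmetrization of your measures in the $y$-variable exactly as in the Remark following Theorem \ref{dj}, since your measures live on $\R\times[0,\infty)$; (ii) the identity $R^\alpha_{k,\lambda}\bigl(P(\mathcal{L}_\alpha,-i\partial_t)f\bigr)=P\bigl((2k+\alpha+1)|\lambda|,-\lambda\bigr)R^\alpha_{k,\lambda}(f)$ rests on the formal self-adjointness of $\mathcal{L}_\alpha$ with respect to $r^{2\alpha+1}dr\,dt$ and on $\Delta_\alpha^mf\in L^2(S)$ controlling the mixed derivatives $\partial_t^j\mathcal{L}_\alpha^mf$ in $L^2$; this is at the same level of rigor as the paper's own manipulations but deserves a sentence.
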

\begin{proof}  Let $ \widetilde{ \Omega}_\alpha = \{ (\lambda, (2k+\alpha+1)|\lambda|): \lambda \in \R, k \in \mathbb{N} \}, $ which is known as the Heisenberg fan when $ \alpha = n-1.$ We  let $ \Omega_\alpha = \{ (x,y): (x,y^2) \in  \widetilde{ \Omega}_\alpha \} $ and define a measure $\mu_f$ on $ \R^2$  supported on $ \Omega_\alpha $ as follows: for any Borel function $ \varphi$ on $ \R^2$ 
$$ \int_{\R^2}   \varphi(x,y) d\mu_f(x,y) =   \int_{-\infty}^\infty  \Big(\sum_{k=0}^\infty  \varphi_e(\lambda, \sqrt{(2k+\alpha+1)|\lambda|}) \frac{\Gamma(k+\alpha+1)}{\Gamma(k+1)\Gamma(\alpha +1)}\,  |R_{k,\lambda}^\alpha(f )| \Big) |\lambda|^{\alpha+1}d\lambda
$$
 where   $ \varphi_e(x,y) =  \frac{1}{2} \big( \varphi(x,y)+ \varphi(x,-y)  \big).$ Under the assumptions on $ f $ it follows that $ \mu_f $ is a finite Borel measure which satisfies
$$      \int_{\mathbb{R}^2}  \varphi(x,-y) d\mu_f(x,y) = \int_{\mathbb{R}^2} \varphi(x,y) d\mu_f(x,y).$$
As a consequence, all the odd moments $ M^{(2)}(2m+1) $ of $ \mu_f $ are zero and the even moments  are given by 
\begin{equation}\label{eve-mom}
 M^{(2)}(2m) =   \int_{-\infty}^\infty  \Big(\sum_{k=0}^\infty   ((2k+\alpha+1)|\lambda|)^m   \frac{\Gamma(k+\alpha+1)}{\Gamma(k+1)\Gamma(\alpha +1)}\,|R_{k,\lambda}^\alpha(f )| \Big) |\lambda|^{\alpha+1}d\lambda .
 \end{equation} 
 We also have 
 \begin{equation}
 M^{(1)}(2m) =   \int_{-\infty}^\infty  \Big(\sum_{k=0}^\infty  \lambda^{2m}  \frac{\Gamma(k+\alpha+1)}{\Gamma(k+1)\Gamma(\alpha +1)}\,  |R_{k,\lambda}^\alpha(f )| \Big) |\lambda|^{\alpha+1}d\lambda.
 \end{equation}
 We will now show that the moments $ M^{(j)}(2m),\, j=1,2 $ satisfy the Carleman condition. Observe that $ M^{(j)}(2m) \leq M(2m) $ where
 \begin{equation}
 M(2m) =   \int_{-\infty}^\infty  \Big(\sum_{k=0}^\infty \big(\lambda^{2}+(2k+\alpha+1)|\lambda| \big)^m \frac{\Gamma(k+\alpha+1)}{\Gamma(k+1)\Gamma(\alpha +1)}\,  |R_{k,\lambda}^\alpha(f )| \Big) |\lambda|^{\alpha+1}d\lambda.
 \end{equation}
 Therefore, it is enough to check the Carleman condition for $ M(2m).$ By splitting $ M(2m) = M_0(2m)+M_\infty(2m) $ where 
 $$ M_0(2m) =   \int_{-\infty}^\infty  \Big(\sum_{ (2k+\alpha+1)|\lambda| \leq 1}   \big( \lambda^2 + (2k+\alpha+1)|\lambda| \big)^m   \frac{\Gamma(k+\alpha+1)}{\Gamma(k+1)\Gamma(\alpha +1)}\,|R_{k,\lambda}^\alpha(f )| \Big) |\lambda|^{\alpha+1}d\lambda $$
 we estimate them separately.
 
  By applying Cauchy-Schwarz inequality and using the Plancherel formula stated in Theorem \ref{planch} we see that $ M_0(2m)^2 $ bounded by 
 $$  C\,   \|f\|_{L^2(S)}^2  \int_{-\infty}^\infty  \Big(\sum_{ (2k+\alpha+1)|\lambda| \leq 1}   \big(\lambda^2+(2k+\alpha+1)|\lambda| \big)^m   \frac{\Gamma(k+\alpha+1)}{\Gamma(k+1)\Gamma(\alpha +1)}\, \Big) |\lambda|^{\alpha+1}d\lambda .$$
 As $   \frac{\Gamma(k+\alpha+1)}{\Gamma(k+1)\Gamma(\alpha +1)}\, \leq C_\alpha (2k+\alpha+1)^{\alpha} $ and $ \lambda^2+ (2k+\alpha+1)|\lambda| \leq 2 $ the above integral is bounded by
 $$  C_\alpha \, 2^m\,  \sum_{k=0}^\infty  (2k+\alpha+1)^{\alpha} \int_{(2k+\alpha+1)|\lambda| \leq 1} |\lambda|^{\alpha+1} d\lambda \leq C_\alpha^\prime  \,2^m\, \sum_{k=0}^\infty (2k+\alpha+1)^{-2} < \infty .   $$
 This gives the estimate $ M_0(2m) \leq  2^m\, C_1 \|f\|_{L^2(S)} .$  In order to estimate $ M_\infty(2m) $ we choose  a  positive integer $ j > \alpha/2+1$  so that 
 $$  C_j^2 = \int_{-\infty}^\infty  \Big(\sum_{(2k+\alpha+1)|\lambda| \geq 1}    ((2k+\alpha+1)|\lambda|)^{-2j}   \frac{\Gamma(k+\alpha+1)}{\Gamma(k+1)\Gamma(\alpha +1)}\, \Big) |\lambda|^{\alpha+1}d\lambda  < \infty .$$
 By writing 
 $ \big( \lambda^2+ (2k+\alpha+1)|\lambda| \big)^m  = \big( \lambda^2+ (2k+\alpha+1)|\lambda| \big)^{m+j} \big(\lambda^2+ (2k+\alpha+1)|\lambda|\big)^{-j} ,$  using (\ref{lag-exp}) and applying  Cauchy-Schwarz, we see that $ M_\infty(2m)^2 $ is bounded by $ C_j^2 $ times
 $$ \int_{-\infty}^\infty \Big(  \sum_{k=0}^\infty   \big(\lambda^2+ (2k+\alpha+1)|\lambda| \big)^{2(m+j)}  \frac{\Gamma(k+\alpha+1)}{\Gamma(k+1)\Gamma(\alpha +1)}\, |R_{k,\lambda}^\alpha(f)|^2 \Big) |\lambda|^{\alpha+1}\, d\lambda $$ 
 which is a constant multiple of  $ \| \Delta_\alpha^{m+j}f \|_{L^2(S)}^2 .$ Thus we have proved the estimates 
 \begin{equation}\label{carle}
 M^{(j)}(2m) \leq  a_j \| \Delta_\alpha^{m+j}f \|_{L^2(S)} + b^{2m} \|f\|_{L^2(S)} .
 \end{equation}
In view of the second part of Lemma \ref{series}, the hypothesis gives $\sum_{m=1}^{\infty}\|\Delta_\alpha^{m+j}f\|_{L^2(S)}^{-\frac{1}{2m}}=\infty.$ Using this along with the first part of Lemma \ref{series}, the above estimate allows us to  conclude that $\sum_{m=1}^{\infty}M^{(j)}(2m)^{-\frac{1}{2m}}=\infty.$ Thus the moment sequences $ M^{(j)}(2m) $ satisfy the Carleman condition.

Hence by  the remark after Theorem \ref{dj}  we know that polynomials that are even in the second variable are dense in $L^1_{2,e}(\R^2,d\mu_f).$ Consider the function $\varphi $ defined on $ \Omega_\alpha$ by 
$$  \varphi(\lambda, \sqrt{(2k+\alpha+1)|\lambda|}) =  \varphi(\lambda, -\sqrt{(2k+\alpha+1)|\lambda|}) =   \overline{R_{k,\lambda}^\alpha (f)}.$$
As $ \varphi(x,y) $ is even in the second variable it follows that
$$\int_{\R^2} |\varphi(x,y)| d\mu_f(x,y) = \int_{-\infty}^\infty \Big(  \sum_{k=0}^\infty  \frac{\Gamma(k+\alpha+1)}{\Gamma(k+1)\Gamma(\alpha +1)}\, |R_{k,\lambda}^\alpha(f)|^2 \Big) |\lambda|^{\alpha+1}\, d\lambda = c_\alpha^{-1} \|f\|_{L^2(S)}^2.$$
This shows that $ \varphi \in L^1_{2,e}(\R^2,d\mu_f) $ and hence given any $ \epsilon > 0 $ we can find a polynomial $ q(x,y) $ which is even in the second variable such that
$$ \left| \int_{\R^2} (\varphi(x,y) - q(x,y)) d\mu_f(x,y) \right| \leq  \int_{\R^2} |\varphi(x,y)-q(x,y)|d\mu_f(x,y) < \epsilon.$$
Therefore, with $ \psi(x,y) = \varphi(x,y)-q(x,y), $ which is even in the second variable, we have 
\begin{equation}\label{approx} \left| \int_{-\infty}^\infty  \Big(   \sum_{k=0}^\infty  \psi(\lambda, \sqrt{(2k+\alpha+1)|\lambda|}) \frac{\Gamma(k+\alpha+1)}{\Gamma(k+1)\Gamma(\alpha +1)}\, R_{k,\lambda}^\alpha(f)  \Big) |\lambda|^{\alpha+1} d\lambda \right| < \epsilon.
\end{equation}
We now claim that 
$$  \int_{-\infty}^\infty  \Big(   \sum_{k=0}^\infty  q(\lambda, \sqrt{(2k+\alpha+1)|\lambda|}) \frac{\Gamma(k+\alpha+1)}{\Gamma(k+1)\Gamma(\alpha +1)}\, R_{k,\lambda}^\alpha(f)  \Big) |\lambda|^{\alpha+1} d\lambda  = 0.$$
Assuming the claim for a moment let us complete the proof. Recalling the definition of $ \varphi ,$ from (\ref{approx}) we obtain 
$$ \int_{-\infty}^\infty  \Big(   \sum_{k=0}^\infty  \varphi(\lambda, \sqrt{(2k+\alpha+1)|\lambda|}) \frac{\Gamma(k+\alpha+1)}{\Gamma(k+1)\Gamma(\alpha +1)}\, R_{k,\lambda}^\alpha(f)  \Big) |\lambda|^{\alpha+1} d\lambda  = 
c_\alpha^{-1}  \|f\|_{L^2(S)}^2 < \epsilon.$$
As $ \epsilon $ is arbitrary, this proves the theorem.

Returning to the claim, it is enough to show that 
\begin{equation}\label{claim-2}  \int_{-\infty}^\infty  \Big(   \sum_{k=0}^\infty  \lambda^j  ((2k+\alpha+1)|\lambda|)^m  \frac{\Gamma(k+\alpha+1)}{\Gamma(k+1)\Gamma(\alpha +1)}\, R_{k,\lambda}^\alpha(f)  \Big) |\lambda|^{\alpha+1} d\lambda  = 0 
\end{equation}
for any $ j, m  \in \mathbb{N}.$ This follows from the hypothesis that $ f $ vanishes in a neighbourhood of $ (0,0) $ and the inversion formula ( see Theorem \ref{planch}):

$$  f(r,t) =  c_\alpha (2\pi)^{-1} \int_{-\infty}^\infty  \Big(\sum_{k=0}^\infty   \, \frac{\Gamma(k+\alpha+1)}{\Gamma(k+1)\Gamma(\alpha +1)}\,R_{k,\lambda}^\alpha(f )   e_{k,-\lambda}^\alpha(r, t)\Big) \, |\lambda|^{\alpha+1} \,d\lambda.$$
By applying $ \partial_t^j\, \mathcal{L}_\alpha^m $ to the above formula, evaluating at $ (0,0) $ and using $ e_{k,\lambda}^\alpha(0,0) = 1 $ the claim (\ref{claim-2}) is proved.
\end{proof}

\subsection{Chernoff's theorem for $ \Delta_{\He}$ on the Heisenberg group} 	

 We make use of this Theorem \ref{cl} to prove the following analogue of Chernoff's theorem for the  full Laplacian $\Delta_{\He}$ on $\mathbb{H}^n.$  For the proof we need the expansion (\ref{eq:expression}) of the spherical means $ f \ast \mu_r(z,t) $ in terms of $ \varphi_{k,\lambda}^{n-1}(t).$
 \begin{thm}
 	\label{csl}
 	Let $f\in C^{\infty}(\mathbb{H}^n)$ be such that $\Delta_{\He}^mf\in L^2(\mathbb{H}^n)$ for all $m\geq 0$  and satisfies the Carleman condition $\sum_{m=1}^{\infty}\|\Delta_{\He}^mf\|_2^{-\frac{1}{2m}}=\infty.$ If $f$ vanishes on a non-empty open set, then $f$ is identically zero.
 \end{thm}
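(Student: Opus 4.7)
The plan is to reduce matters to the generalized--Laplacian Chernoff theorem (Theorem \ref{cl}) at $\alpha = n-1$ by means of spherical means evaluated at $z = 0$. Let $V$ be the nonempty open set on which $f$ vanishes. Fix any $(z_0, t_0) \in V$ and set $g(z, t) := f((z_0, t_0) \cdot (z, t))$. Because $\Delta_{\He}$ is left--invariant and left translations preserve Haar measure, $g$ satisfies $\|\Delta_{\He}^m g\|_2 = \|\Delta_{\He}^m f\|_2$ (so the Carleman condition persists), and moreover $g$ vanishes on a neighborhood of $0 \in \He^n$. It therefore suffices to prove that $g \equiv 0$.

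Next I would introduce the auxiliary function on $S = \R^+ \times \R$ defined by $F(r, t) := g \ast \mu_r(0, t)$. Substituting $z = 0$ in (\ref{eq:expression}) gives
\begin{equation*}
F(r, t) = (2\pi)^{-n-1} \int_{-\infty}^\infty e^{-i\lambda t} \Big(\sum_{k=0}^\infty \frac{k!(n-1)!}{(k+n-1)!}\, \varphi_{k,\lambda}^{n-1}(r)\, (g^\lambda \ast_{\lambda} \varphi_{k,\lambda}^{n-1})(0)\Big)\, |\lambda|^n\, d\lambda,
\end{equation*}
which is exactly the Laguerre expansion of Theorem \ref{planch} at $\alpha = n-1$. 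Since $\mathcal{L}_{n-1}$ acts on $\varphi_{k,\lambda}^{n-1}(r)$ with the same eigenvalue $(2k+n)|\lambda|$ as the special Hermite operator $L_\lambda$ acts on $g^\lambda \ast_\lambda \varphi_{k,\lambda}^{n-1}$, and $\partial_t^2$ contributes the common factor $-\lambda^2$, one reads off the key identity
\begin{equation*}
\Delta_{n-1}^m F(r, t) = (\Delta_{\He}^m g) \ast \mu_r(0, t), \qquad m \geq 0.
\end{equation*}
Applying Cauchy--Schwarz to the spherical average and then polar coordinates on $\C^n$ yields
\begin{equation*}
\|\Delta_{n-1}^m F\|_{L^2(S,\, r^{2n-1}\, dr\, dt)}^2 \leq C_n \|\Delta_{\He}^m g\|_{L^2(\He^n)}^2 = C_n \|\Delta_{\He}^m f\|_{L^2(\He^n)}^2,
\end{equation*}
so $F$ inherits the Carleman condition and vanishes near $(0, 0) \in S$. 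Theorem \ref{cl} now forces $F \equiv 0$.

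Thus $g \ast \mu_r(0, t) = 0$ for all $r > 0$ and $t \in \R$; unraveling the left translation, this reads $f \ast \mu_r(z_0, s) = 0$ for all $r > 0$ and $s \in \R$. Since $(z_0, t_0) \in V$ was arbitrary, letting $\Omega \subset \C^n$ denote the (nonempty, open) projection of $V$ onto the $z$--plane, I obtain $f \ast \mu_r(z, s) = 0$ for every $r > 0$, $s \in \R$, and $z \in \Omega$. Taking the Euclidean Fourier transform in $s$ produces $f^\lambda \ast_\lambda \mu_r(z) = 0$ for every $r > 0, z \in \Omega$, and a.e.\ $\lambda$. Using the Laguerre series expansion of $f^\lambda \ast_\lambda \mu_r$ in the $r$--variable (the identity preceding (\ref{eq:expression})) together with the linear independence of the Laguerre basis $\{\varphi_{k,\lambda}^{n-1}\}_k$ in $L^2(\R^+, r^{2n-1}\, dr)$ yields $(f^\lambda \ast_\lambda \varphi_{k,\lambda}^{n-1})(z) = 0$ for every $k$, every $z \in \Omega$, and a.e.\ $\lambda$.

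Finally, these twisted convolutions are eigenfunctions of the elliptic special Hermite operator $L_\lambda$; written explicitly as convolutions of the integrable $f^\lambda$ against the entire Laguerre--Gaussian $\varphi_{k,\lambda}^{n-1}$ through a Gaussian Weyl phase, they are real--analytic on $\C^n$. Vanishing on the open set $\Omega$ therefore forces them to vanish identically on $\C^n$, and the special Hermite expansion (\ref{flexap}) gives $f^\lambda \equiv 0$ for a.e.\ $\lambda$, whence $f \equiv 0$. The main obstacle I expect is the clean derivation of the coupling identity $\Delta_{n-1}^m F = (\Delta_{\He}^m g) \ast \mu_r(0, \cdot)$ and its associated $L^2$--comparison: these are what actually transport the full $\Delta_{\He}$--Chernoff data on $\He^n$ into $\Delta_{n-1}$--Chernoff data on $S$ and make Theorem \ref{cl} applicable. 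Once this coupling is in hand, the remaining analyticity/injectivity step for spherical means is essentially standard.
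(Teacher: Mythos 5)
Your proposal is correct and follows essentially the same route as the paper: reduce to Theorem \ref{cl} with $\alpha=n-1$ via the spherical means $f\ast\mu_r$, transfer the Carleman condition from $\Delta_{\He}$ to $\Delta_{n-1}$ using the expansion (\ref{eq:expression}), and finish by real-analyticity of the eigenfunctions $f^\lambda\ast_\lambda\varphi_{k,\lambda}^{n-1}$ of $L_\lambda$. The only (harmless) variations are that you fix $z=0$ and vary the translation point over $V$ where the paper fixes the translation and varies $z\in B_\delta(0)$, and that you obtain the key inequality $\|\Delta_{n-1}^mF\|_{L^2(S)}^2\leq C_n\|\Delta_{\He}^mf\|_2^2$ by Cauchy--Schwarz on the spherical average plus polar coordinates, which is a slightly more elementary derivation than the paper's route through the Plancherel formula and the identity $\varphi_{k,\lambda}^{n-1}\ast_\lambda\varphi_{k,\lambda}^{n-1}=(2\pi)^n|\lambda|^{-n}\varphi_{k,\lambda}^{n-1}$.
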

\begin{proof}
	Since the Laplacian $\Delta_{\He}$ is translation invariant, without loss of generality we can assume that $f$ vanishes on an open neighbourhood $V$ of the identity in $\mathbb{H}.$  Clearly for some $ a>0,\, B_{a}(0)\times (-a,a)\subset V$ where $B_a(0)$ denotes the ball of radius $a$ in $\mathbb{C}^n.$ Now we consider the spherical means of $f$ 
	$$f\ast\mu_r(z,t):=\int_{|w|=r}f\Big(z-w,t-\frac12\Im z\cdot \overline{w}\Big)\,d\mu_r(w)$$ 
and we consider  $F_{z}(r,t):=f\ast\mu_r(z,t) $ as a function on $ S = \R^+ \times \R.$ Let $ \delta = \min(a/2, \sqrt{a})$. For any  $z\in B_{\delta}(0),\, (r,t)\in U:=(0,\delta)\times (-\delta/2,\delta/2)$ and   $|w|=r$, we see that $|z-w|< a$ and $|t-\frac12\Im z\cdot \overline{w}|< a/2 + \delta^2/2\leq a $ so  that $(z-w,t-\frac12\Im z\cdot \overline{w})\in B_a(0)\times (-a,a)$. Consequently, for any  $z\in B_{\delta}(0)$, $F_z(r,t)=0$ for all $(r,t)\in U.$ 
Now,  a comparison of Theorem \ref{planch} with the following expansion
\begin{equation}
	\label{eq:express}
 F_z(r,t) =(2\pi)^{-n-1}\int_{-\infty}^{\infty}e^{-i\lambda t}\left(\sum_{k=0}^{\infty}\frac{k!(n-1)!}{(k+n-1)!}\varphi_{k,\lambda}^{n-1}( r)f^{\lambda}\ast_{\lambda}\varphi_{k,\lambda}^{n-1}(z)\right)|\lambda|^n\,d\lambda
\end{equation}
shows that $$ R_{k,\lambda}^{n-1}(F_z) =  \frac{k!(n-1)!}{(k+n-1)!}  f^\lambda \ast_\lambda \varphi_{k,\lambda}^{n-1}(z).$$ 

As $ \varphi_{k,\lambda}^{n-1}(r) e^{-i\lambda t} $ are eigenfunctions of $ \Delta_{n-1} = -\partial_t^2+\mathcal{L}_{n-1}$ it follows from the Plancherel  formula in Theorem \ref{planch} that
\begin{equation}\label{scl1}
		\|\Delta_{n-1}^mF_z\|_{L^2(S)}^2=   \int_{-\infty}^{\infty}  \Big(  \sum_{k=0}^\infty ( \lambda^2+(2k+n)|\lambda|)^{2m}\,   \frac{k!(n-1)!}{(k+n-1)!}\, |f^{\lambda}\ast_\lambda\varphi_{k,\lambda}^{n-1}(z)|^2 \Big)|\lambda|^nd\lambda.
\end{equation}
In view of  the well  known  formula $  \varphi_{k,\lambda}^{n-1} \ast_\lambda \varphi_{k,\lambda}^{n-1} = (2\pi)^{n} |\lambda|^{-n} \varphi_{k,\lambda}^{n-1} $ (see \cite[Corollary 2.3.4]{TH3}
we have  $$ f^\lambda \ast_\lambda \varphi_{k,\lambda}^{n-1} (z)= (2\pi)^{-n} |\lambda|^{n}\,  f^\lambda \ast_\lambda \varphi_{k,\lambda}^{n-1} \ast_\lambda  \varphi_{k,\lambda}^{n-1}(z) $$
which gives the following estimate by Cauchy-Schwarz inequality:
\begin{equation}\label{hsnorm}  |\lambda|^{n} \, |f^\lambda \ast_\lambda \varphi_{k,\lambda}^{n-1}(z)|^2 \leq c_n \, |\lambda|^{2n}\, \frac{(k+n-1)!}{k!(n-1)!} \| f^\lambda \ast_\lambda \varphi_{k,\lambda}^{n-1}\|_2^2.
\end{equation}
In proving the above we have made use of the fact 
 $\|\varphi_{k,\lambda}^{n-1} \|^2_2=c_n|\lambda|^{-n} \frac{(k+n-1)!}{k!(n-1)!}.$ Using this in (\ref{scl1}) and recalling the Plancherel formula (\ref{projplan}) we obtain

	$$	\|\Delta_{n-1}^mF_z\|_{L^2(S)}^2\leq c_n  \int_{-\infty}^{\infty}  \Big( \sum_{k=0}^\infty (\lambda^2+ (2k+n)|\lambda|)^{2m}  \| f^\lambda \ast_\lambda \varphi_{k,\lambda}^{n-1}\|_2^2 \Big) |\lambda|^{2n}\, d\lambda = c_n \| \Delta_{\He}^m f \|_2^2 $$

Therefore,  the hypothesis on $ f $ allows us to  conclude that $\sum_{m=1}^{\infty}\|\Delta_{n-1}^mF_z\|_{L^2(S)}^{-\frac1{2m}}=\infty.$   But we know  that $F_z$ vanishes on the  neighbourhood $U$ of $(0,0)$  and hence we can appeal to  Theorem \ref{cl} to conclude that $F_z$ is identically zero.
This means for any $z\in B_{\delta}(0)$, $f^{\lambda}\ast_{\lambda}\varphi_{k,\lambda}^{n-1}(z)=0$ for every $(\lambda, k)\in \mathbb{R}\times \mathbb{N}.$ But $f^{\lambda}\ast_{\lambda}\varphi_{k,\lambda}^{n-1},$ being an eigenfunction of the elliptic operator $L_{\lambda}$, is real analytic. Hence $f^{\lambda}\ast_{\lambda}\varphi_{k,\lambda}^{n-1}$ is identically zero for all $\lambda$ and $k$. Therefore, it follows that $f=0$ which proves the theorem.
\end{proof}

\begin{rem}\label{rem-cher} A close examination of the above proof shows that we only need to assume that 
$ \sum_{k=0}^\infty \|\Delta_{n-1}^mF_z\|_{L^2(S)}^{-\frac{1}{2m}} = \infty $  for all $ z \in B_\delta(0) .$
We will make use of this observation in formulating and proving an Ingham type theorem for the Fourier transform on the Heisenberg group in the next section.
\end{rem}

\begin{rem}
		Proving the exact analogue of Chernoff's theorem for $\Delta_{\He}$ where the vanishing condition in Theorem \ref{csl} is replaced by the vanishing of all partial derivatives of the function at a single point, is a very interesting open problem.  
\end{rem}
\section{ Ingham's theorem on the Heisenberg group}
	
	In this section, we make use of the version of Chernoff's theorem proved in the previous section, to prove Ingham type uncertaity principles on  $\mathbb{H}^n$. 
\subsection{ Ingham's theorem for the Fourier transform}  We begin with a proof of Theorem \ref{ingh-easy}. Under the integrability assumption on $ \Theta $ we can construct compactly supported functions $ g$ and $ h $ on $ \He^n$ and $ \R $ respectively such that
$$  \hat{g}(\lambda)^\ast \hat{g}(\lambda) \leq C \, e^{-2\sqrt{H(\lambda)} \,\Theta(\sqrt{H(\lambda))}},\,\,  |\hat{h}(\lambda)| \leq C e^{-|\lambda|\, \Theta(|\lambda|)}.$$ 
(See \cite[Theorem 4.5]{BGST} for the Heisenberg group case and \cite{I} for $\mathbb{R}$). Then the function $  f = g \ast_3 h $ satisfies (\ref{Ing-decay-mod}) where $\ast_3$ stands for the convolution in the $t$-variable . For the converse, assume that $ \int_1^\infty  \Theta(t) t^{-1} dt = \infty.$  If $ f $ is compactly supported and satisfies (\ref{Ing-decay-mod}) we need to  prove that $ f =0.$ We first assume that  $ \Theta(\lambda) \geq c \lambda^{-1/2},\, \lambda \geq 1.$ It is enough to show that for any $ \varphi \in L^2(\C^n)$ the function $  f_\varphi(t) = \int_{\C^n} f(z,t) \varphi(z) dz  $ vanishes identically. As $ f_\varphi $ is compactly supported, in view of Ingham's theorem for the Fourier transform on $ \R $ it is enough to show that $ |\hat{f_\varphi}(\lambda)| \leq C e^{-|\lambda|\, \Theta(|\lambda|)}.$ By Cauchy-Schwarz inequality,
$$ |\hat{f_\varphi}(-\lambda)| = |\int_{C^n} f^\lambda(z) \varphi(z) dz | \leq \|\varphi \|_2  \| f^\lambda\|_2 = (2\pi)^{-n/2} \|\varphi \|_2 |\lambda|^{n/2} \| \hat{f}(\lambda)\|_{HS} .  $$
Calculating the Hilbert-Schmidt norm by using the Hermite basis and using the hypothesis (\ref{Ing-decay-mod}) we obtain
$$  \| \hat{f}(\lambda)\|_{HS}^2 \leq C  e^{-2 |\lambda|\, \Theta(|\lambda|)} \big( \sum_{k=0}^\infty \frac{(k+n-1)!}{k! (n-1)!} e^{-2 \sqrt{(2k+n)|\lambda|}\, \Theta(\sqrt{(2k+n)|\lambda|})} \big).$$
Under the extra assumption on $ \Theta $ the above sum is bounded by a constant multiple of 
$$   \big( \sum_{k=0}^\infty (2k+n)^{n-1}  e^{-2c ((2k+n)|\lambda|)^{1/4}}  \big) \leq  C_1 \int_0^{\infty} t^{n-1} e^{-2\,c \,(|\lambda|t)^{1/4}} \leq C_2 |\lambda|^{-n}. $$
This proves the required estimate on $ \hat{f_\varphi}(\lambda)$ under the extra assumption on $ \Theta.$

The extra assumption on $ \Theta $ namely, $ \Theta(\lambda) \geq c \lambda^{-1/2},\, \lambda \geq 1$ can be removed by proceeding as in \cite[Theorem 4.6]{BGST}. With $ \theta(\lambda)= (1+\lambda^2)^{-1/4} $ we can construct a  compactly supported radial function $ g $ on $ \He^n $  such that
$$  \hat{g}(\lambda)^\ast \hat{g}(\lambda) \leq C e^{-2 \sqrt{H(\lambda)} \, \theta(\sqrt{(H(\lambda)})}\,  e^{-2|\lambda| \, \theta(|\lambda|)} $$
and let $ g_\delta(z,t) = \delta^{-(2n+2)} g(\delta^{-1} z, \delta^{-2}t) .$  Then as  shown in  [1,  Theorem 4.6] the function
$ f_\delta(z,t) = f \ast g_\delta(z,t) $ will satisfy the hypothesis with $\Theta $ replaced by  $ \Psi_\delta(\lambda) = \Theta(\lambda)+  \theta_\delta(\lambda) $  for which the extra condition viz. $ \Psi_\delta(\lambda) \geq c_\delta |\lambda|^{-1/2}, |\lambda| \geq 1 $ holds. Hence, we can conclude that $ f \ast g_\delta = 0 $ for all $ \delta >0.$ Finally an approximate identity argument completes the proof.\\

We remark that the above proof does not work if  $ f $ is not compactly supported but only vanishes on an open  set. This is simply because the function $ f_\varphi(t) $ need not vanish on any open interval. We now present a proof of Theorem \ref{ingh-hei-1} for which we require the following preparatory lemma and a proposition.

\begin{lem}\label{ing-lem} Let $ \Theta $ be as in Theorem \ref{ingh-hei-1}. Further assume that $ \Theta(\lambda) \geq c \lambda^{-1/2},\, \lambda \geq 1.$ Then the sequence 
$ A_m = \int_0^\infty \lambda^{m+n}  e^{-\lambda \,\Theta(\lambda)} d\lambda $ satisfies the estimate $ A_m \leq C_n \big( \frac{2m}{\Theta(m^4)}\big)^m $ for all $ m \geq m_0.$
\end{lem}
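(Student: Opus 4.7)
The plan is to split the integral at the threshold $\lambda = m^4$ and handle the two pieces separately, using the monotonicity of $\Theta$ on the inner piece and the lower bound $\Theta(\lambda) \geq c\lambda^{-1/2}$ on the outer piece.  Write $A_m = I_1 + I_2$ where $I_1 = \int_0^{m^4} \lambda^{m+n} e^{-\lambda \Theta(\lambda)}\,d\lambda$ and $I_2 = \int_{m^4}^\infty \lambda^{m+n} e^{-\lambda \Theta(\lambda)}\,d\lambda$.

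For $I_1$, I would use that $\Theta$ is decreasing to replace $\Theta(\lambda)$ by $\Theta(m^4)$ on $[0,m^4]$, yielding
\[
I_1 \leq \int_0^\infty \lambda^{m+n} e^{-\lambda \Theta(m^4)}\,d\lambda = \frac{\Gamma(m+n+1)}{\Theta(m^4)^{m+n+1}}.
\]
Then I would apply Stirling in the form $(m+n)! \leq C_n\sqrt{m}\,(m/e)^{m+n}$ together with the hypothesis $\Theta(m^4) \geq c m^{-2}$ to control the extra factor $\Theta(m^4)^{-(n+1)}$ by a polynomial in $m$. A direct calculation shows the ratio of $I_1$ to the target $(2m/\Theta(m^4))^m$ is bounded by $C_n m^{3n+5/2}/(2e)^m$, which is $\leq 1$ for all $m \geq m_0(n)$, so $I_1 \leq C_n (2m/\Theta(m^4))^m$.

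For $I_2$, I would invoke the hypothesis $\Theta(\lambda) \geq c\lambda^{-1/2}$ to get $\lambda\Theta(\lambda) \geq c\sqrt{\lambda}$ on $[m^4,\infty)$, and after the substitution $u = c\sqrt{\lambda}$ rewrite
\[
I_2 \leq \frac{2}{c^{2m+2n+2}} \int_{cm^2}^\infty u^{2m+2n+1} e^{-u}\,du.
\]
Since the lower limit $cm^2$ eventually dominates the order $2m+2n+2$ of the gamma integrand, an integration-by-parts bound gives $\Gamma(2m+2n+2, cm^2) \leq 2(cm^2)^{2m+2n+1} e^{-cm^2}$. This yields $I_2 \leq C_n m^{4m+4n+2} e^{-cm^2}$, which tends to $0$ super-exponentially; in particular $I_2 \leq 1$ for $m$ large, and this is trivially dominated by $(2m/\Theta(m^4))^m$, since $\Theta(m^4) \to 0$ forces the target to blow up.

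The main obstacle---mild and essentially computational---is the choice of the splitting threshold $m^4$. This exponent is dictated by the hypothesis $\Theta(\lambda) \geq c\lambda^{-1/2}$: picking $\lambda_0 = m^4$ produces the Gaussian-type decay $e^{-cm^2}$ on the tail $I_2$ (killing any polynomial misestimate there) while keeping $\Theta(m^4) \geq cm^{-2}$ large enough that the polynomial blow-up $\Theta(m^4)^{-(n+1)}$ on $I_1$ is swallowed by the exponential shaving $e^{-m}$ coming from Stirling.
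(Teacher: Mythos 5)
Your proof is correct. The paper does not actually write out a proof of this lemma---it only cites Ingham's 1934 argument---and your split of the integral at $\lambda=m^{4}$, using monotonicity of $\Theta$ to bound the inner piece by $\Gamma(m+n+1)\,\Theta(m^{4})^{-(m+n+1)}$ and the hypothesis $\Theta(\lambda)\geq c\lambda^{-1/2}$ to get Gaussian-type decay $e^{-cm^{2}}$ on the tail, is exactly the standard computation behind that citation; the Stirling and incomplete-gamma estimates all check out.
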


This is proved as part of the proof of Ingham's theorem in \cite{I}. We also need the following proposition proved in \cite[Proof of Theorem 4.6]{BGST}.

\begin{prop}\label{cher-prop} Let $ \Theta $ be as in Theorem \ref{ingh-hei-1}. Further assume that $ \Theta(\lambda) \geq c \lambda^{-1/2},\, \lambda \geq 1.$ Then  under the assumption that $  \hat{f}(\lambda)^\ast \hat{f}(\lambda) \leq C \, e^{-2\sqrt{H(\lambda)} \,\Theta(\sqrt{H(\lambda))}},$ then for some constant $ a \geq 1 $ we have the estimate
$ \| \mathcal{L}^m f\|_2  \leq \big( \frac{ a m}{\Theta(m^4)}\big)^{2m} $ for all $  m \geq m_0. $
\end{prop}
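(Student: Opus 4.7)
The plan is to bound $\|\mathcal{L}^m f\|_2^2$ by inserting the hypothesis into the Plancherel formula expressed in terms of the spectral projections $f\mapsto f^\lambda\ast_\lambda\varphi_{k,\lambda}^{n-1}$. Using the identity $W_\lambda(\varphi_{k,\lambda}^{n-1})=(2\pi)^n|\lambda|^{-n}P_k(\lambda)$ together with the Weyl--Plancherel formula \eqref{wplan}, one checks that
\begin{equation*}
\|f^\lambda\ast_\lambda\varphi_{k,\lambda}^{n-1}\|_2^2=(2\pi)^n|\lambda|^{-n}\|\hat{f}(\lambda)P_k(\lambda)\|_{HS}^2.
\end{equation*}
Since $H(\lambda)$ acts as the scalar $(2k+n)|\lambda|$ on the range of $P_k(\lambda)$, which has dimension $\tfrac{(k+n-1)!}{k!(n-1)!}$, the operator inequality $\hat{f}(\lambda)^\ast\hat{f}(\lambda)\leq C\,e^{-2\sqrt{H(\lambda)}\,\Theta(\sqrt{H(\lambda)})}$ combined with $\|\hat{f}(\lambda)P_k(\lambda)\|_{HS}^2=\tr(P_k\hat{f}^\ast\hat{f}P_k)$ yields
\begin{equation*}
\|\hat{f}(\lambda)P_k(\lambda)\|_{HS}^2\leq C\,\frac{(k+n-1)!}{k!(n-1)!}\,e^{-2\sqrt{(2k+n)|\lambda|}\,\Theta(\sqrt{(2k+n)|\lambda|})}.
\end{equation*}

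I would then substitute this estimate into the spectral expansion
\begin{equation*}
\|\mathcal{L}^m f\|_2^2=(2\pi)^{-2n-1}\int_{\R}\sum_{k=0}^\infty((2k+n)|\lambda|)^{2m}\,\|f^\lambda\ast_\lambda\varphi_{k,\lambda}^{n-1}\|_2^2\,|\lambda|^{2n}\,d\lambda
\end{equation*}
and, for each fixed $k$, change variables $s=\sqrt{(2k+n)|\lambda|}$, which converts $((2k+n)|\lambda|)^{2m}|\lambda|^n\,d\lambda$ into a constant multiple of $(2k+n)^{-(n+1)}s^{4m+2n+1}\,ds$ on each half-line. Since the $k$-sum $\sum_k\tfrac{(k+n-1)!}{k!(n-1)!}(2k+n)^{-(n+1)}$ is finite (it is dominated by $\sum_k k^{-2}$), one arrives at the one-dimensional estimate
\begin{equation*}
\|\mathcal{L}^m f\|_2^2\leq C_n'\int_0^\infty s^{4m+2n+1}\,e^{-2s\,\Theta(s)}\,ds.
\end{equation*}

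To conclude, Lemma \ref{ing-lem} (after the trivial scaling $u=2s$, which loses only a bounded factor because $\Theta$ is decreasing) applied with its index replaced by $4m+n+1$ bounds the last integral by a constant multiple of $\bigl(c(4m+n+1)/\Theta((4m+n+1)^4)\bigr)^{4m+n+1}$. Using the extra hypothesis $\Theta(\lambda)\geq c\lambda^{-1/2}$ to control $\Theta((4m+n+1)^4)$ from below by a negative power of $m$, and absorbing the subpolynomial surplus into an enlarged constant $a$, this can in turn be bounded by $(am/\Theta(m^4))^{4m}$ for all $m\geq m_0$; taking square roots proves the stated estimate.

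The main obstacle is the combined change of variables $(k,\lambda)\mapsto s=\sqrt{(2k+n)|\lambda|}$, which converts the two-parameter Heisenberg spectral data (indexed by the Heisenberg fan) into a one-parameter integral to which the Ingham-type Lemma \ref{ing-lem} applies. Keeping careful track of the Plancherel densities $|\lambda|^n$, $|\lambda|^{2n}$ and of the Hermite multiplicities $\tfrac{(k+n-1)!}{k!(n-1)!}$ is where most of the bookkeeping lies, and matching the index $4m+n+1$ produced by the lemma to the cleaner $4m$ appearing in the conclusion relies essentially on the lower bound $\Theta(\lambda)\geq c\lambda^{-1/2}$.
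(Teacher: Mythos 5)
The paper does not prove this proposition internally: it is quoted from the proof of Theorem 4.6 of \cite{BGST}, so there is no in-paper argument to compare against line by line. That said, your reduction is the natural one and most of it is correct: the identity $\|f^\lambda\ast_\lambda\varphi_{k,\lambda}^{n-1}\|_2^2=(2\pi)^n|\lambda|^{-n}\|\hat f(\lambda)P_k(\lambda)\|_{HS}^2$, the trace bound $\tr(P_k\hat f(\lambda)^\ast\hat f(\lambda)P_k)\le C\,\tfrac{(k+n-1)!}{k!(n-1)!}\,e^{-2\sqrt{(2k+n)|\lambda|}\,\Theta(\sqrt{(2k+n)|\lambda|})}$, and the change of variables $s=\sqrt{(2k+n)|\lambda|}$ with the convergent factor $\sum_k\tfrac{(k+n-1)!}{k!(n-1)!}(2k+n)^{-n-1}<\infty$ all check out and legitimately yield $\|\mathcal L^mf\|_2^2\le C_n'\int_0^\infty s^{4m+2n+1}e^{-2s\Theta(s)}\,ds$.

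The gap is in the final step. Invoking Lemma \ref{ing-lem} with index $M=4m+n+1$ gives $\left(2M/\Theta(M^4)\right)^{M}$, and you then need to trade $\Theta(M^4)$ for $\Theta(m^4)$. Monotonicity gives $\Theta(M^4)\le\Theta(m^4)$, which is the \emph{wrong} direction, and the resulting discrepancy $\left(\Theta(m^4)/\Theta(M^4)\right)^{4m}$ is not a subpolynomial surplus: without a doubling-type hypothesis on $\Theta$ the ratio $\Theta(m^4)/\Theta((4m+n+1)^4)$ can be unbounded along a subsequence (e.g.\ $\Theta$ piecewise constant, equal to $1/j!$ on $[t_j,t_{j+1})$ with $\log t_{j+1}-\log t_j=j!$, is decreasing, non-integrable against $t^{-1}dt$, and satisfies $\Theta(\lambda)\ge c\lambda^{-1/2}$), so it cannot be absorbed into $a^{4m}$. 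If instead you bound $1/\Theta(M^4)\le c^{-1}M^2$ throughout, you get $(Cm^3)^{4m+n+1}$, which eventually exceeds $(am/\Theta(m^4))^{4m}$ for every fixed $a$. The repair is to run the proof of Lemma \ref{ing-lem} directly on your integral with the split at $s=m^4$: on $[0,m^4]$ one has $\int_0^{m^4}s^{4m+2n+1}e^{-s\Theta(m^4)}ds\le\Gamma(4m+2n+2)\,\Theta(m^4)^{-(4m+2n+2)}$, and here the extra $2n+2$ powers of $\Theta(m^4)^{-1}\le c^{-1}m^2$ and the polynomial excess of $\Gamma(4m+2n+2)$ over $(4m/e)^{4m}$ really are absorbable into $a^{4m}$; on $[m^4,\infty)$ the bound $s\Theta(s)\ge c\sqrt s\ge\tfrac{c}{2}m^2+\tfrac{c}{2}\sqrt s$ makes the tail $o(1)$. (Alternatively, your weaker conclusion with $\Theta((4m+n+1)^4)$ in place of $\Theta(m^4)$ would still serve the proposition's only purpose, since $\sum_m\Theta(Cm^4)/m$ diverges whenever $\int_1^\infty\Theta(t)t^{-1}dt$ does.)
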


{\bf{Proof of Theorem \ref{ingh-hei-1}:}} First we make the observation: without loss of generality, we can assume that $f$  vanishes on $B_{\mathbb{H}}(0,a)$ for some $a>0$ where $B_{\mathbb{H}}(0,a)$ is  the Koranyi ball of radius $a$.  We first assume that  $ \Theta(\lambda) \geq c \lambda^{-1/2},\, \lambda \geq 1.$  We will show that under the hypothesis in Theorem \ref{ingh-hei-1}, the function $ f $ satisfies the conditions of  Theorem \ref{cher}. In view of Plancherel theorem for $ \He^n$ we have
$$ \| \Delta_{\He}^m f \|_2^2 = (2\pi)^{-n-1} \int_{-\infty}^\infty  \|  \hat{f}(\lambda)\big(\lambda^2+ H(\lambda)\big)^m \|_{HS}^2  \, |\lambda|^n d\lambda.$$ 
Calculating the Hilbert-Schmidt operator norm using the Hermite basis,  we have 
$$\|\Delta_{\He}^mf\|^2_2=(2\pi)^{-n-1}\int_{-\infty}^{\infty} \big(\sum_{\alpha \in \mathbb{N}^n} \,(\lambda^2+(2|\alpha|+n)|\lambda|)^{2m}\|\hat{f}(\lambda)\Phi^{\lambda}_{\alpha}\|^2_2 \big) \, |\lambda|^n\, d\lambda.$$ 
In estimating the above we split the sum into two parts. The term where  the sum is taken over those $ \alpha $ for which $ (2|\alpha|+n) \geq |\lambda| $ is bounded by
\begin{equation}\label{est-1}   2^{2m} (2\pi)^{-n-1}\int_{-\infty}^{\infty} \big(\sum_{(2|\alpha|+n) \geq |\lambda| } \,((2|\alpha|+n)|\lambda|)^{2m}\|\hat{f}(\lambda)\Phi^{\lambda}_{\alpha}\|^2_2 \big) \, |\lambda|^n\, d\lambda \leq 2^{2m} \| \mathcal{L}^m f\|_2^2 .
\end{equation} 
The remaining part of $ \|\Delta_{\He}^mf\|^2_2$ is bounded by
\begin{equation}\label{est-2}
 2^{2m} (2\pi)^{-n-1}\int_{-\infty}^{\infty}   \lambda^{4m} \big(\sum_{(2|\alpha|+n) \leq |\lambda| } \,\|\hat{f}(\lambda)\Phi^{\lambda}_{\alpha}\|^2_2 \, \big) \, |\lambda|^n\, d\lambda.
 \end{equation}
Under the hypothesis on $ f $ the Fourier transform satisfies (\ref{Ing-decay-mod}) and hence the above term is bounded by
$$ 2^{2m} (2\pi)^{-n-1}\int_{-\infty}^{\infty}   \lambda^{4m}  e^{-2 |\lambda|\,\Theta(|\lambda|)} \big(\sum_{(2|\alpha|+n) \leq |\lambda| } \, e^{-2 \sqrt{(2|\alpha+n)|\lambda|}  \, \Theta( \sqrt{(2|\alpha|+n)|\lambda|})} \big) \, |\lambda|^n\, d\lambda. $$
Under the extra assumption, $ \lambda \, \Theta(\lambda) \geq c \lambda^{1/2}$, the sum inside the above integral is bounded by
$$  \sum_{(2|\alpha|+n) \leq |\lambda| } \, e^{-2 c ((2|\alpha+n)|\lambda|)^{1/4}}  \leq \sum_{\alpha \in \mathbb{N}^n} \, e^{-2c \sqrt{(2|\alpha|+n)}} \leq C. $$
Thus, the term (\ref{est-2}) is estimated by the integral
\begin{equation}\label{est-3}
C\, 2^{2m} \, \int_{-\infty}^{\infty} \lambda^{4m}|\lambda|^n  e^{-2 |\lambda|\,\Theta(|\lambda|)} d\lambda  \leq C_n 2^{-2m} \,\int_{0}^{\infty} \lambda^{4m+n}  e^{- \lambda \,\Theta(\lambda)} d\lambda.
\end{equation}

We can therefore estimate (\ref{est-1}) by using Proposition \ref{cher-prop} and (\ref{est-3}) by means of Lemma \ref{ing-lem} and  for large $m$ obtain
\begin{equation}
\| \Delta_{\He}^m f \|_2 \leq 2^m\, \big( \frac{ a\, m}{\Theta(m^4)}\big)^{2m}  + C_n 2^{-m} \,  \big( \frac{2m}{\Theta(m^4)}\big)^{2m} \leq C^{2m} \big( \frac{m}{\Theta(m^4)}\big)^{2m}
\end{equation}
for some constant $ C >0.$ As $ t^{-1} \Theta(t) $ is not integrable over $ [1,\infty)  $ it follows that $ \sum_{m=1}^\infty  \frac{\Theta(m^4)}{m} = \infty $ and hence $ f $ satisfies the hypothesis in Theorem \ref{cher}. Consequently, $ f $ vanishes identically.

This proves the theorem under the extra assumption on  $ \Theta.$  The general case can be proved as in the proof of Theorem \ref{ingh-easy}  presented above after some suitable modifications at certain places. Indeed, take $ \theta(\lambda)= (1+\lambda^2)^{-1/4} $.  As explained at the beginning of the proof of Theorem \ref{ingh-easy} above,  we can construct a  compactly supported radial function $ g $ on $ \He^n $  such that
	$$  \hat{g}(\lambda)^\ast \hat{g}(\lambda) \leq C e^{- 2 \sqrt{H(\lambda)} \, \theta(\sqrt{(H(\lambda)})}\,  e^{-2 |\lambda| \, \theta(|\lambda|)} $$ and further we can arrange that $ \text{supp}(g)\subset B_{\mathbb{H}}(0,a/2).$ Now defining $g_{\delta}$ as in the proof of Theorem \ref{ingh-easy}, we observe that $f_{\delta}:=f\ast g_{\delta}$ vanishes on $B_{\mathbb{H}}(0,\delta a/2)$ for all $0<\delta<1$. Moreover, as shown in   \cite[Theorem 4.6]{BGST} the function
	$ f_\delta $ will satisfy the hypothesis with $\Theta $ replaced by  $ \Psi_\delta(\lambda) = \Theta(\lambda)+  \theta_\delta(\lambda) $  for which the extra condition viz. $ \Psi_\delta(\lambda) \geq c_\delta |\lambda|^{-1/2}, |\lambda| \geq 1 $ holds. Hence, we can conclude that $ f \ast g_\delta = 0 $ for all $ 0<\delta<1.$ Finally using an approximate identity argument, letting $\delta$ go to zero, we obtain $f=0$ which proves the theorem.

\subsection{Ingham's theorem for the spectral projections} An examination of the above proof reveals that we do not need the full power of the hypothesis (\ref{Ing-decay-mod}) in proving Theorem \ref{ingh-hei-1}. In fact, it is sufficient to assume that for every $ k $
\begin{equation}\label{hs-norm}   \sum_{|\alpha|=k} \| \hat{f}(\lambda)\Phi_\alpha^\lambda \|_2^2 \leq C e^{-2|\lambda|\,\Theta(|\lambda|)} e^{- 2\sqrt{(2k+n)|\lambda|}\, \Theta(\sqrt{(2k+n)|\lambda|})} .\end{equation}
The sum in the above  is just $ \| \hat{f}(\lambda)P_k(\lambda)\|_{HS}^2 $ and since $ W_\lambda(f^\lambda \ast_\lambda \varphi_{k,\lambda}^{n-1}) = (2\pi)^{n} |\lambda|^{-n} \hat{f}(\lambda)P_k(\lambda),$ the estimate (\ref{hs-norm}) follows once we assume that
\begin{equation}\label{hs-norm-1} |\lambda|^n\,\int_{\C^n} | f^\lambda \ast_\lambda \varphi_{k,\lambda}^{n-1}(z)|^2 dz \leq C\,  e^{-2|\lambda|\,\Theta(|\lambda|)} e^{-2 \sqrt{(2k+n)|\lambda|}\, \Theta(\sqrt{(2k+n)|\lambda|})} .\end{equation}
In view of the formula (\ref{norm-rho-k}) it is clear that (\ref{hs-norm-1}) is an immediate consequence of 
\begin{equation}\label{hs-norm-2}   \frac{(k+n-1)!}{k!(n-1)!} \, \|\rho_k^\lambda(f)\|_{HS}^2  \leq C\,  e^{-2|\lambda|\,\Theta(|\lambda|)} e^{- 2 \sqrt{(2k+n)|\lambda|}\, \Theta(\sqrt{(2k+n)|\lambda|})} .\end{equation}
These chain of inequalities clearly show that Theorem \ref{ingh-hei-1} can be proved  under the assumption (\ref{norm-rho-k}) as claimed in the introduction. We now present a proof of Theorem \ref{ingh-hei-2} which shows that the norm estimate on $ \rho_k^\lambda $ can be replaced by pointwise estimate.\\

{\bf{Proof of Theorem \ref{ingh-hei-2}:}}
  If we let $ f_h(g) = f(h^{-1}g) $ stand for  the left translation of $ f $ by an element $ h $ of $ \He^n$, then $ \rho_k^\lambda(f_h)e_{k,\lambda}^{n-1}(z,t) = \rho_k^\lambda(f)e_{k,\lambda}^{n-1}(h^{-1}(z,t)) $ and hence we can assume that $ f $ vanishes on a neighbourhood of $0.$ Without loss of generality we can assume that $ f $ vanishes in a neighbourhood $ V $ of  zero .  In view of  Remark \ref{rem-cher} it is enough to show that $ \sum_{m=1}^\infty \|\Delta_{n-1}^mF_z\|_{L^2(S)}^{-\frac{1}{2m}} =\infty $ for all $ z \in B_\delta(0) $ for some $\delta>0$ where $F_{z}$ is as in the proof of Theorem \ref{csl} and
\begin{equation}\label{scl2}
		\|\Delta_{n-1}^mF_z\|_{L^2(S)}^2=   \int_{-\infty}^{\infty}  \Big(  \sum_{k=0}^\infty ( \lambda^2+(2k+n)|\lambda|)^{2m}\,   \frac{k!(n-1)!}{(k+n-1)!}\, |f^{\lambda}\ast_\lambda\varphi_{k,\lambda}^{n-1}(z)|^2 \Big)|\lambda|^nd\lambda.
\end{equation}
We can rewrite the above in terms of  $ \rho_k^\lambda(f)e_{k,\lambda}^{n-1}(z,t) $ using  the relation
$$ \rho_k^\lambda(f)e_{k,\lambda}^{n-1}(z,t) =  e^{i \lambda t} f^{-\lambda} \ast_{-\lambda} \varphi_{k,\lambda}^{n-1}(z) .$$ 
Thus we are led to estimate the following:
$$   \int_{-\infty}^{\infty}  \Big(  \sum_{k=0}^\infty ( \lambda^2+(2k+n)|\lambda|)^{2m}\,   \frac{k!(n-1)!}{(k+n-1)!} \, \sup_{(z,t) \in V} |\rho_k^{-\lambda}(f)e_{k,\lambda}^{n-1}(z,t)|^2 \Big)|\lambda|^n d\lambda $$
which under the assumption that 
$$ \sup_{(z,t) \in V}  |\rho_k^\lambda(f)e_k^\lambda(z,t)| \leq C\, e^{-|\lambda|\,\Theta(|\lambda|)} e^{-\sqrt{(2k+n)|\lambda|}\, \Theta(\sqrt{(2k+n)|\lambda|})},~\forall \lambda,k$$ along with (\ref{scl2}) shows that $	\|\Delta_{n-1}^mF_z\|_{L^2(S)}^2$ is dominated by 
$$  \int_{-\infty}^{\infty}  \Big(  \sum_{k=0}^\infty ( \lambda^2+(2k+n)|\lambda|)^{2m}\,   \frac{k!(n-1)!}{(k+n-1)!}\, e^{-2|\lambda|\,\Theta(|\lambda|)} e^{-2\sqrt{(2k+n)|\lambda|}\, \Theta(\sqrt{(2k+n)|\lambda|})} \Big)|\lambda|^nd\lambda. $$   Now under the assumption that $ \Theta(\lambda) \geq c \lambda^{-1/2},\, \lambda \geq 1$, as in the proof of Theorem \ref{ingh-hei-1},  we can show that  $	\|\Delta_{n-1}^mF_z\|_{L^2(S)}$ satisfies the Carleman condition and hence by Theorem \ref{cher} we conclude that $f$ is identically zero. 
 
For the general case, we proceed as follows. Let $g_{\delta}$ and $f_{\delta}$ be as in the proof of Theorem \ref{ingh-hei-1}. Then we have $f_{\delta}$ vanishes in a neighbourhood $V_{\delta}$ of the origin for all $0<\delta<1.$ We need to show that $f_{\delta}$ satisfies the hypothesis of the Theorem \ref{ingh-hei-2}.  Since $g_{\delta}$ is radial, it follows that $$|R^{n-1}_{k,\lambda}(g_{\delta})|\leq C e^{-|\lambda|\,\theta_{\delta}(|\lambda|)} e^{-\sqrt{(2k+n)|\lambda|}\, \theta_{\delta}(\sqrt{(2k+n)|\lambda|})}, ~\text{for all}\ \lambda, \ k$$
 where $$R^{n-1}_{k,\lambda}(g_{\delta})=\frac{k!(n-1)!}{(k+n-1)!}\int_{C^n}g^{\lambda}_{\delta}(z)\varphi_{k,\lambda}^{n-1}(z)dz.$$ Now expanding $g^{\lambda}_{\delta} $ in terms of Laguerre functions (See \cite[Proof of Proposition 2.4.2]{TH3} ) and  making use of the following fact (See \cite[Corollary 2.3.4]{TH3}) $$\varphi_{k,\lambda}^{n-1}\ast_{\lambda}\varphi_{m,\lambda}^{n-1}=\delta_{km}(2\pi)^{n}|\lambda|^{-n}\varphi_{k,\lambda}^{n-1}$$ we obtain 
\begin{equation}
	\rho_k^{\lambda}(f_{\delta})e^{n-1}_{k,\lambda}(z,t)= \,e^{i\lambda t} \, R^{n-1}_{k,-\lambda}(g_{\delta}) \,\,f^{-\lambda}\ast_{-\lambda}\varphi_{k,\lambda}^{n-1}(z).
\end{equation}
Hence it follows that $$\sup_{(z,t) \in V_{\delta}}|\rho_k^{\lambda}(f_{\delta})e^{n-1}_{k,\lambda}(z,t)|\leq C e^{-|\lambda|\,\Psi_{\delta}(|\lambda|)} e^{-\sqrt{(2k+n)|\lambda|}\, \Psi_{\delta}(\sqrt{(2k+n)|\lambda|})}$$ where $\Psi_{\delta}:=\Theta+\theta_{\delta}$ and by construction $\Psi_{\delta}(\lambda)\geq c_{\delta}|\lambda|^{-1/2}$ for $|\lambda|\geq 1.$ Therefore, from the first part of the proof it follows that $f_{\delta}=0$ for $0<\delta<1$ which in view of an approximate identity type argument yields $f=0$ proving the theorem.
\begin{rem}
	The Theorem \ref{ingh-hei-2} is sharp in the sense that when $\int_{1}^{\infty}\Theta(t)t^{-1}dt<\infty$ there exist a compactly supported smooth function $f$ on $\mathbb{H}^n$ satisfying the uniform estimate
	\begin{equation}
		\label{dcingh1}
		|\rho_k^\lambda(f)e_k^\lambda(z,t)| \leq C\, e^{-|\lambda|\,\Theta(|\lambda|)} e^{-\sqrt{(2k+n)|\lambda|}\, \Theta(\sqrt{(2k+n)|\lambda|})}.
	\end{equation}
	 Indeed, as explained in the proof of Theorem \ref{ingh-easy} there exist a compactly supported smooth radial function $f$ on $\mathbb{H}^n$ whose Fourier transform satisfies (\ref{Ing-decay-mod}). Now since $f$ is radial, proceeding as in the proof above, the above estimate (\ref{dcingh1}) can be checked easily. 
\end{rem}
\begin{rem}
	It would be interesting to see whether the conclusions of the Theorems \ref{ingh-easy}, \ref{ingh-hei-1} and \ref{ingh-hei-2} still hold true if we use two different decreasing functions in the decay condition  instead of just one.  
\end{rem}

\section*{Acknowledgments}   The first author is supported by Int. Ph.D. scholarship from Indian Institute of Science.
The second author is supported by  J. C. Bose Fellowship from the Department of Science and Technology, Government of India.

\end{document}